\documentclass{article}
\usepackage{graphicx} 
\usepackage{latexsym}
\usepackage{amsfonts, mathrsfs, amssymb, amsmath}
\usepackage{a4wide,fullpage}
\usepackage{url}
\usepackage{enumerate}
\usepackage{bm}
\usepackage{amsthm}
\usepackage{color}
\usepackage{bbm}
\usepackage{natbib}
\usepackage[colorlinks,citecolor=blue,urlcolor=blue]{hyperref}

\newcommand\E{{\mathbb E}}

\newcommand\given{\, \vert \, }

\newtheorem{theorem}{Theorem}

\newtheorem{lemma}{Lemma}

\newtheorem{proposition}{Proposition}
\newtheorem{remark}{Remark}

\allowdisplaybreaks

\title{Limit Laws for the Distance to Fr\'echet Means of Random Graphs}
\author{Qunqiang Feng, Zixin Tang, and Zhishui Hu{\thanks{Email: huzs@ustc.edu.cn}}\\
{\small Department of Statistics and Finance, School of Management} \\
{\small University of Science and Technology of China}\\
{\small Hefei 230026, China}}
\date{}

\begin{document}
\maketitle

\begin{abstract}
This paper investigates the Fr\'echet mean of the Erd\H{o}s-R\'enyi random graph $G_{n,p}$ 
with respect to the Frobenius distance on graph Laplacians, 
a metric that captures global structural information beyond local edge flips.
We first characterize the Fr\'echet mean set as consisting of quasi-regular graphs 
(i.e., graphs where all vertex degrees differ by at most one). 
We then analyze the asymptotic behavior of the Frobenius distance $F_n=d_{\mathrm{F}}(G_{n,p},R)$ as $n\to\infty$, 
where $R$ is any Fr\'echet mean. 
Closed-form expressions for the mean and variance of $F_n^2$ are derived, which are invariant to the choice of $R$.
Leveraging these results, 
we establish several weak convergence laws for the Frobenius distance over all regimes of $p \in (0,1)$
as $n \to \infty$. 
Finally, under the scaling condition $n^2 p(1-p) \to \infty$ we prove the asymptotic normality of this distance, 
which exhibits a phase transition governed by the growth rate of $np(1-p)$. 
Our results reveal how metric selection fundamentally shapes Fr\'echet mean geometry in random graphs.

\bigskip
\noindent{\it Keywords:} Erd\H{o}s-R\'{e}nyi random graph; 
Frobenius metric; regular graph; dependency graph; Stein's method

\noindent{\it AMS 2020 Subject Classification}: Primary
   05C80,    
   60C05;     
   secondary
   60F05   
\end{abstract}

\section{Introduction}

The {\em Fr\'echet mean}, introduced by Maurice Fr\'echet \cite{Frechet1948les}, 
generalizes the concept of expectation from Euclidean space to general metric spaces $(\mathcal{X},d)$.
It is defined as any minimizer of the {\em Fr\'echet function}
\begin{equation}\label{frefun}
   f_{\mu}(x)=\int_{\mathcal{X}} d^2(x,y){\rm d}\mu(y), \quad x\in \mathcal{X}, 
\end{equation}
where $\mu$ is a probability measure on $\mathcal{X}$. 
The existence and uniqueness of the Fr\'echet mean depend on the topological properties of the underlying metric space
\cite{karcher1977riemannian,hotz2015intrinsic,Cao2025}.   
When uniqueness fails, 
all the minimizers usually form a closed set \cite{Evans2024limit}, termed the {\em Fr\'echet mean set}.

Owing to its applicability to non-Euclidean data, the Fr\'echet mean has gained significant traction 
in probability theory \cite{Barden2013,Schotz02022strong,Evans2024limit}, 
statistics \cite{Bhattacharya2012,Dai2018,Petersen2019,McCormack2022,McCormack2023,Aveni2025,Torres-Signes2025}, 
and machine learning \cite{zhou2022network,Bouchard2024}. It has found particular applications in network data analysis 
\cite{Ginestet2017,Kolaczyk2020,Lunagomez02102021}, 
where graph-valued data inherently resides in non-Euclidean spaces.

Meyer \cite{meyer2021frechet} recently derived the Fr\'echet mean for inhomogeneous 
Erd\H{o}s-R\'enyi random graphs \cite{bollobas2007phase} using the Hamming distance 
(defined via adjacency matrices; see the next section). 
For the homogeneous Erd\H{o}s-R\'enyi random graph $G_{n,p}$ on $n$ vertices with edge probability $p$, 
an immediate consequence of Meyer \cite{meyer2021frechet} is that the Fr\'echet mean of $G_{n,p}$ is 
the empty graph when $p \le 1/2$ and the complete graph otherwise. 
However, these extreme graphs lie on the ``boundary" of the graph space and fail to capture 
typical topological features of $G_{n,p}$ for general $p\in (0,1)$. 
While the Hamming distance measures local edge flips and is sensitive to sparsity \cite{donnat2018}, 
the Frobenius distance defined through graph Laplacians
incorporates global structural information beyond local connectivity \cite{Chung1997}.
As this distance is increasingly used in network analysis \cite{zhou2022network,brouwer2012}, 
in this paper we propose studying the Fr\'echet mean of Erd\H{o}s-R\'enyi random graphs under the Frobenius metric.

In contrast to the results in Meyer \cite{meyer2021frechet}, we establish that under the Frobenius distance,
the Fr\'echet mean of $G_{n,p}$ is the empty graph only if $np < 1$, and the complete graph only if $n(1-p) < 1$.
Furthermore, for any positive integer $n$ and $p\in (0,1)$, the Fr\'echet mean set of $G_{n,p}$ comprises quasi-regular graphs 
(see remarks following Theorem \ref{theo:frechet}). 
Although uniqueness does not always hold, 
we additionally characterize the asymptotic behavior 
of the Frobenius distance between $G_{n,p}$ and elements of its Fr\'echet mean set, 
as the graph size $n$ tends to infinity.  

To this end, we first derive an explicit expression for the Fr\'echet function of $G_{n,p}$ in terms of vertex degrees, 
which reduces the problem to minimizing a separable quadratic form. 
By analyzing the resulting degree sequences, we obtain a complete description of the Fr\'echet mean set. 
The characterization reveals a rich structure: when $np$ is an integer, 
any $(np-1)$-regular or $np$-regular graph can be a Fr\'echet mean; otherwise, 
the Fr\'echet mean set consists of nearly regular graphs whose degrees differ by at most one.

The main technical contribution of the paper is a detailed analysis of the asymptotic distribution of the Frobenius distance
$F_n$ between $G_{n,p}$ and any Fr\'echet mean graph of it.
The limiting behavior exhibits a phase transition governed by the growth rate of $np(1-p)$. 
When $np(1-p)\to 0$ but $n^2p(1-p)\to\infty$ (very sparse regime) or $np(1-p)\to c>0$ (sparse regime), 
the squared distance $F_n^2$ converges to a normal distribution after appropriate centering and scaling; 
we prove this using a dependency graph approach. In the dense regime where $np(1-p)\to\infty$, 
the dependency graph becomes too dense for a direct application, 
and we instead employ a refined normal approximation method based on discrete-difference Stein bounds 
recently developed by Shao and Zhang \cite{shao2025104574}. 
This allows us to establish asymptotic normality for $F_n^2$ with an explicit variance formula. 
Applying the delta method then yields the corresponding limit laws for $F_n$ itself. 
Our results show that the Frobenius distance behaves differently from the Hamming distance, 
capturing the intrinsic global structure of Erd\H{o}s-R\'enyi random graphs.

Throughout this paper, we shall use the following notation.
For any real number $x$, let $\lfloor x\rfloor$ denote its floor function 
(i.e., the greatest integer less than or equal to $x$), 
and let $\{x\}=x-\lfloor x\rfloor$ represent its fractional part.
For two sequences of positive numbers $a_n$ and $b_n$,
we write $a_n\sim b_n$ if $a_n/b_n\to 1$ as $n\to\infty$. 
We write $a_n=\Theta(b_n)$  if there exist constants $0<c_1\le c_2<\infty$ such that
$c_1\le \liminf_{n\to \infty} \frac{a_n}{b_n}\le \limsup_{n\to \infty} \frac{a_n}{b_n}\le c_2$.
We denote by $\operatorname{diag}(x_1,x_2,\dots,x_n)$ the diagonal matrix 
whose $i$-th diagonal entry is $x_i$ for each $i=1,2,\dots,n$.
For an $n\times n$ matrix $A=(a_{ij})$, the Frobenius norm of $A$ is defined as $\|A\|_{\text{F}}=(\sum_{i=1}^n\sum_{j=1}^na_{ij}^2)^{1/2}$.
For probabilistic convergence, let
$\xrightarrow{D}$ and $\xrightarrow{P}$ denote convergence in distribution and convergence in probability, 
respectively.
 
The rest of the paper is organized as follows. 
Section 2 reformulates the Fréchet function for $G_{n,p}$ via vertex degrees and characterizes its Fr\'echet mean set 
as consisting of quasi-regular graphs. 
Section 3 derives the mean and variance of the squared Frobenius distance between $G_{n,p}$ and its Fr\'echet means, 
and establishes weak convergence results with a phase transition driven by $n^2p(1-p)$. 
Section 4 proves the asymptotic normality of this distance under the condition $n^2p(1-p)\to\infty$: 
a dependency graph method is used for the very sparse and sparse regimes, 
while a refined Stein bounds-based normal approximation is adopted for the dense regime; 
the delta method then yields the corresponding asymptotic normality for the Frobenius distance itself.

\section{Fr\'echet mean set for Erd\H{o}s-R\'enyi random graphs}

We begin by defining a metric space on the set $\mathcal{G}_n$ of all simple graphs with vertex set
$[n]:=\{1,2,\dots,n\}$, where $n\ge2$ is a fixed integer. 
Clearly, the space $\mathcal{G}_n$ has cardinality $|\mathcal{G}_n|=2^{\binom{n}{2}}$.
Here we consider two fundamental matrices for graphs: the adjacency matrix and the Laplacian matrix \cite{Chung1997}.   
For a graph $G \in \mathcal{G}_n$, its \emph{adjacency matrix} $A = (a_{ij})$ is the $n \times n$ matrix defined by
\[
a_{ij}=\left\{\begin{array}{cl}
     1, &  \mbox{if there is an edge between vertices $i$ and $j$ in $G$};\\
     0, &  \mbox{otherwise}, 
\end{array}\right. 
\]
with $a_{ii}=0$ for all $i\in[n]$.
The {\em  Laplacian matrix} $L=(l_{ij})$ of $G$ is given by
\begin{align*}
  L=D - A,  
\end{align*}
where $D = \operatorname{diag}(D_1, D_2, \dots, D_n)$ is the degree matrix with $D_i = \sum_{j=1}^n a_{ij}$ 
recording the degree of vertex $i$. 
From the basic knowledge of graph theory, 
there exist bijective correspondences between $G$, $A$, and $L$. 
Consequently, we can equip $\mathcal{G}_n$ with matrix-induced distances. 
The Frobenius distance is arguably the simplest of these metrics, 
and it is a common choice for the metrics on the space of graph Laplacians (see, e.g., \cite{zhou2022network}).

The {\em Frobenius distance} between graphs $G,G' \in \mathcal{G}_n$ 
can be defined using adjacency matrices,
\begin{align}\label{dFGGA}
    d_{\text{F}}^A(G,G')=\|A-A'\|_{\text{F}}=\left(\sum_{i,j\in[n]}\big(a_{ij}-a_{ij}'\big)^2\right)^{\frac12},
\end{align}
or Laplacian matrices,
\begin{align}\label{dFGGL}
    d_{\text{F}}^L(G,G')=\|L-L'\|_{\text{F}}=\left(\sum_{i,j\in[n]}\big(l_{ij}-l_{ij}'\big)^2\right)^{\frac12},
\end{align}
where $A'$ and $L'$ correspond to $G'$.

Since all entries in the adjacency matrix are equal to 0 or 1, 
we have $|a_{ij}-a_{ij}'| = (a_{ij}-a_{ij}')^2$ for every pair $(i,j)$. 
Hence, the Hamming distance between $G$ and $G'$ satisfies 
\[
d_{\text{H}}(G,G')=\frac12\sum_{i,j\in[n]}|a_{ij}-a_{ij}'|=\frac12\big[d_{\text{F}}^A(G,G')\big]^2,
\]
by \eqref{dFGGA}.
It thus follows that $d_{\text{F}}^A$ and $d_{\text{H}}$ are equivalent metrics on $\mathcal{G}_n$.
As described in the introduction, 
we will work exclusively with the Laplacian-based distance \eqref{dFGGL},
and use the notation $d_{\text{F}}$ for brevity (omitting the superscript $L$).

For any $G\in \mathcal{G}_n$, let $\overline{G}\in \mathcal{G}_n$ denote its complement graph, i.e.,
any two distinct vertices are adjacent in $\overline{G}$ if and only if they are not adjacent in $G$.
Since the Frobenius norm is invariant under sign change, one can obtain the symmetry
\begin{equation}\label{basicpdF}
d_{\text{F}}(G,G')= d_{\text{F}}(\overline{G},\overline{G'}).
\end{equation}

Define the space of adjacency matrices as
\[
\mathcal{A}_n=\big\{A\in\{0,1\}^{n\times n}: a_{ii}=0,\ a_{ij}=a_{ji}\ \text{for all }i,j\in[n]\big\}.
\]
Let $G_{n,p}$ be the Erd\H{o}s-R\'enyi random graph on $[n]$ with edge probability $p\in(0,1)$. 
Then, for any fixed graph $G\in\mathcal{G}_n$ with adjacency matrix $A=(a_{ij})\in\mathcal{A}_n$, 
the probability mass function of $G_{n,p}$ is
\[
\mathbb{P}(G_{n,p}=G)=\prod_{1\le i<j\le n}p^{a_{ij}}(1-p)^{1-a_{ij}}.
\]
Within the Fr\'echet mean framework (see \eqref{frefun} for the general definition), 
the Fr\'echet function of $G_{n,p}$ is given by the expected squared distance to a candidate graph $G$:
\begin{equation}\label{FrechetfG}
f(G)=\E\big[d_{\text{F}}^2(G,G_{n,p})\big]=\sum_{G'\in\mathcal{G}_n} d_{\text{F}}^2(G,G')\,\mathbb{P}(G_{n,p}=G'), 
\qquad G\in\mathcal{G}_n.
\end{equation}
This function admits the following explicit representation.

\begin{proposition}\label{Prop:Ffunction}
For any graph $G \in \mathcal{G}_n$ with vertex degrees $D_1, D_2, \dots, D_n$, we have that
 \begin{equation}\label{FrefunfA}
    f(G)=c+\sum_{i=1}^n \big[D_i^2-(2np-1)D_i\big],   
\end{equation}
where the constant $c=2n(n-1)p+n(n-1)(n-2)p^2$ is independent of the particular graph $G$.
\end{proposition}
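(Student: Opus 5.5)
The plan is to expand the squared Frobenius distance entrywise, split it into diagonal and off-diagonal parts, and take expectations term by term using only the first two moments of the random degrees of $G_{n,p}$. Write $G'=G_{n,p}$ with adjacency entries $a'_{ij}$ (independent Bernoulli$(p)$ for $i<j$) and degrees $D'_i=\sum_{j\ne i}a'_{ij}\sim\mathrm{Bin}(n-1,p)$. Since $L-L'=(D-D')-(A-A')$ and $A-A'$ has zero diagonal, we get
\[
d_{\mathrm F}^2(G,G')=\sum_{i=1}^n (D_i-D'_i)^2+\sum_{i\ne j}(a_{ij}-a'_{ij})^2 .
\]
The off-diagonal part equals $\sum_{i\ne j}|a_{ij}-a'_{ij}|$, because the entries are $0/1$.

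The off-diagonal expectation is computed first. For each ordered pair $i\ne j$ we have $\E|a_{ij}-a'_{ij}|=a_{ij}(1-p)+(1-a_{ij})p=p+a_{ij}(1-2p)$. Summing over ordered pairs, and using $\sum_{i\ne j}a_{ij}=\sum_i D_i$, gives
\[
n(n-1)p+(1-2p)\sum_i D_i .
\]

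The diagonal expectation comes next. For each $i$,
\[
\E(D_i-D'_i)^2=D_i^2-2D_i(n-1)p+\E D_i'^2,
\]
where $\E D_i'^2=(n-1)p(1-p)+(n-1)^2p^2$. Summing over $i$ gives
\[
\sum_i D_i^2-2(n-1)p\sum_iD_i+n(n-1)p(1-p)+n(n-1)^2p^2 .
\]

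Finally I combine the two parts. The linear coefficient of $\sum_i D_i$ is $(1-2p)-2(n-1)p=1-2np$, which matches $-(2np-1)$. The constant term is
\[
n(n-1)p+n(n-1)p-n(n-1)p^2+n(n-1)^2p^2=2n(n-1)p+n(n-1)(n-2)p^2 ,
\]
since $n(n-1)^2p^2-n(n-1)p^2=n(n-1)(n-2)p^2$. This is exactly $c$.

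No real obstacle is expected. The one point needing care is the off-diagonal sum: it runs over ordered pairs, so each edge is counted twice, which gives the factor $n(n-1)$ rather than $\binom n2$ and matches $\sum_{i\ne j}a_{ij}=\sum_iD_i$. A quick check is the case $n=2$ with $G$ empty, where $f(G)=\E\,\|L'\|_{\mathrm F}^2=4p$ and formula \eqref{FrefunfA} gives $c=4p$.
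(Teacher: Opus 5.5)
Your proof is correct and follows essentially the same route as the paper. You split $\|L-L'\|_{\mathrm F}^2$ into its diagonal and off-diagonal parts, take expectations term by term, and your linear coefficient $1-2np$ and constant $c$ both check out. The only cosmetic difference is that the paper first records the pathwise identity \eqref{dF2GGnp2}, using $X_i^2=X_i+\sum_{j\neq i}\sum_{k\neq i,j}I_{ij}I_{ik}$ and reusing it later for $F_n^2$, whereas you substitute the binomial second moment directly.
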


\begin{proof}
Let $A=(a_{ij})$ and $D = \operatorname{diag}(D_1, D_2, \dots, D_n)$ denote the adjacency matrix 
and the degree matrix of the fixed graph $G$, respectively.  
For the random graph $G_{n,p}$, introduce the edge indicators $I_{ij}$ for $1\le i<j\le n$, 
with $I_{ji}=I_{ij}$ and $I_{ii}=0$ for all $i$.  
Thus $A_{n,p}=(I_{ij})$ is its adjacency matrix, 
and its degree matrix is $D_{n,p}=\operatorname{diag}(X_1,X_2,\dots,X_n)$ 
where $X_i=\sum_{j\neq i}I_{ij}$ is the degree of vertex $i$ in $G_{n,p}$.

By definition of the Frobenius distance on Laplacian matrices (\ref{dFGGL}), we have
\begin{align}\label{dF2GGnp}
d_{\text{F}}^2(G,G_{n,p})&= \| (D - A) - (D_{n,p} - A_{n,p}) \|_{\text{F}}^2  \notag\\
&= \| (D - D_{n,p}) - (A - A_{n,p}) \|_{\text{F}}^2  \notag\\
&=  \| D - D_{n,p} \|_{\text{F}}^2  +  \| A - A_{n,p} \|_{\text{F}}^2,
\end{align}
where the cross‑term vanishes since $D-D_{n,p}$ is diagonal while $A-A_{n,p}$ has zero diagonal; 
their Frobenius inner product is therefore zero.

We now evaluate the two remaining terms separately.
For the diagonal term, we have
\begin{align*}
\| D - D_{n,p} \|_{\text{F}}^2 = \sum_{i=1}^n ( D_i - X_i)^2= \sum_{i=1}^n \big( D_i^2 -2D_i X_i+X_i^2\big).
\end{align*}
For each $i$, expanding $X_i^2$ gives
\[
X_i^2= \bigg( \sum_{j \neq i} I_{ij} \bigg)^2 = \sum_{j \neq i} I_{ij} +  \sum_{j\neq i}\sum_{k\neq i,j} I_{ij}I_{ik}
=X_i+  \sum_{j\neq i}\sum_{k\neq i,j} I_{ij}I_{ik},
\]
where we used $I_{ij}^2=I_{ij}$.
Hence, we can proceed with
\begin{align}\label{DDnpF2}
\| D - D_{n,p} \|_{\text{F}}^2 =\sum_{i=1}^n \bigg(D_i^2 -(2D_i-1)X_i+ 
                       \sum_{j\neq i}\sum_{k\neq i,j}I_{ij}I_{ik}\bigg).
\end{align}
Since $a_{ij}^2=a_{ij}$ and $I_{ij}^2=I_{ij}$, for the off-diagonal term we have 
\begin{align}\label{AAnpF2}
\| A - A_{n,p} \|_{\text{F}}^2 &= \sum_{i=1}^n \sum_{j\neq i} (a_{ij} - I_{ij})^2\notag\\
                      &= \sum_{i=1}^n \sum_{j\neq i}\big(a_{ij}-2a_{ij}I_{ij}+I_{ij}\big)\notag\\
                      &= \sum_{i=1}^n\Big(D_i+X_i-2\sum_{j\neq i}a_{ij}I_{ij}\Big).
\end{align}
Substituting \eqref{DDnpF2} and \eqref{AAnpF2} into \eqref{dF2GGnp} yields that
\begin{align}\label{dF2GGnp2}
  d_{\text{F}}^2(G,G_{n,p})=\sum_{i=1}^n \bigg(D_i^2 +D_i-2(D_i-1)X_i-2\sum_{j\neq i}a_{ij}I_{ij}+ 
                       \sum_{j\neq i}\sum_{k\neq i,j}I_{ij}I_{ik}\bigg).  
\end{align}

Note that for any distinct vertices $i,j,k\in[n]$,
\[\E[X_i] = (n-1)p,\quad \E[I_{ij}] = p, \quad \E[I_{ij}I_{ik}] = p^2.\]
Taking expectations on both sides of \eqref{dF2GGnp2}, by \eqref{FrechetfG} we have
\begin{align*}
    f(G)&=\sum_{i=1}^n \big[D_i^2 +D_i-2(D_i-1)(n-1)p-2pD_i+(n-1)(n-2)p^2\big]\notag\\
        &=2n(n-1)p+n(n-1)(n-2)p^2+\sum_{i=1}^n \big[D_i^2-(2np-1)D_i\big],
\end{align*}
which completes the proof of Proposition \ref{Prop:Ffunction}.
\end{proof}

Proposition \ref{Prop:Ffunction} shows that the Fr\'echet function for Erd\H{o}s-R\'enyi random graphs can be decomposed
into a sum of quadratic functions of the vertex degrees. 
Consequently, characterizing the Fr\'echet mean set of $G_{n,p}$  reduces to minimizing $f(G)$
over possible graphical degree sequences $(D_1,D_2,\dots,D_n)$
(i.e., sequences of non‑negative integers with even sum that are realizable by some simple graph on $n$ vertices 
\cite{Tripathi2003}).

Minimizing $f(G)$ in \eqref{FrefunfA} can be viewed as an integer optimization problem with a separable objective.  
Ignoring graphicality constraints, each term $D_i^2-(2np-1)D_i$ is a convex quadratic in $D_i$; its unconstrained minimum is attained at $D_i = np-\tfrac12$.  
Since $D_i$ must be an integer between $0$ and $n-1$, the individually optimal choice is the integer closest to $np-\tfrac12$.  
A simple calculation shows that
\begin{itemize}
    \item If $np$ is an integer, the closest integers are $np$ and $np-1$.
    \item If $np$ is not an integer, the closest integer is $\lfloor np\rfloor$.
\end{itemize}  
Taking these constraints into account leads to the following characterization of the Fr\'echet mean set.

\begin{theorem}\label{theo:frechet}
A graph $G \in \mathcal{G}_n$ belongs to the Fr\'echet mean set of $G_{n,p}$ 
if and only if  its degree sequence $(D_1,D_2,\dots,D_n)$ satisfies
\begin{itemize}
    \item[(i)]  When $np$ is an integer, then $\sum_{i=1}^n D_i$ is even and $D_i \in \{np, np-1\}$ for each $i \in [n]$;
    \item[(ii)] When $np$ is not an integer and $n\lfloor np\rfloor$ is even, $D_i=\lfloor np\rfloor$ for all $i\in [n]$; 
    \item[(iii)] When $np$ is not an integer and $n\lfloor np\rfloor$ is odd, 
                 then $D_i = \lfloor np\rfloor$ for all $i\in [n]$ except one vertex $j$, whose degree is
\begin{equation*}
    D_j=
\begin{cases}
    \lfloor np\rfloor+1,    & \text{if } \{np\}>\frac12; \\
    \lfloor np\rfloor + 1 \text{ or } \lfloor np\rfloor - 1, & \text{if } \{np\}=\frac12;\\
    \lfloor np\rfloor-1,    & \text{if } \{np\}<\frac12.
    \end{cases}
\end{equation*}
\end{itemize}
\end{theorem}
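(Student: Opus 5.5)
By Proposition \ref{Prop:Ffunction}, minimizing $f$ over $\mathcal{G}_n$ is the same as minimizing $\Phi(D_1,\dots,D_n)=\sum_{i=1}^n g(D_i)$ over graphical degree sequences, where $g(x)=x^2-(2np-1)x$. The plan has two parts. First, bound $\Phi$ from below using only the constraint that every graphical sequence has even sum. Second, show that the bound is attained by an explicit graph. The ``only if'' direction then comes from tracking when the lower bound is tight.

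\textbf{Step 1: the single-term minimizers.} Write $g(x)=(x-(np-\tfrac12))^2-(np-\tfrac12)^2$. Its integer minimizers are the integers nearest to $np-\tfrac12$.
\begin{itemize}
\item If $np\in\mathbb{Z}$, the minimizers are exactly $np-1$ and $np$.
\item If $np\notin\mathbb{Z}$, let $m=\lfloor np\rfloor$. The distance from $m$ to $np-\tfrac12$ is $|\{np\}-\tfrac12|<\tfrac12$, so $m$ is the unique minimizer.
\end{itemize}
In the second case I compare the next-best values. Since $g(m\pm1)-g(m)=1\pm 2(m-np+\tfrac12)$, we get
\[
g(m+1)-g(m)=2-2\{np\},\qquad g(m-1)-g(m)=2\{np\}.
\]
Every other integer is strictly worse than both of these, by convexity.

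\textbf{Step 2: the parity lower bound.}
\begin{itemize}
\item In case (i), $\Phi\ge n\,g(np)$. Equality holds iff every $D_i\in\{np-1,np\}$, and graphicality forces the sum to be even.
\item In case (ii), $\Phi\ge n\,g(m)$, with equality iff all $D_i=m$.
\item In case (iii), the sequence $D_i\equiv m$ has odd sum $nm$, so it is excluded. Hence some $D_j\ne m$, and
\[
\Phi\ge (n-1)g(m)+\min\{g(m+1),g(m-1)\}.
\]
Equality requires exactly one deviating vertex, with its deviation chosen by comparing $2-2\{np\}$ with $2\{np\}$. This gives exactly the three subcases listed according to whether $\{np\}$ is above, equal to, or below $\tfrac12$.
\end{itemize}
Any other sequence is either strictly larger, or (in case (iii)) has odd sum or more deviations and is therefore strictly larger. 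This proves the ``only if'' direction, provided the bound is actually attained.

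\textbf{Step 3: realizability (the main obstacle).} The minimal value must be achieved by some simple graph. Then the minimizers are exactly the graphs whose degree sequences satisfy the stated conditions, and every sequence satisfying those conditions must also be graphical, or at least the characterization is read as ``a graph $G$ belongs iff its degrees satisfy\ldots''. The required existence facts are:
\begin{itemize}
\item A $k$-regular graph on $n$ vertices exists whenever $0\le k\le n-1$ and $nk$ is even. A circulant graph on $\mathbb{Z}_n$ with connection set $\{\pm1,\dots,\pm k/2\}$ works, adding the antipodal element when $k$ is odd and $n$ is even.
\item A sequence consisting of all $m$'s except one entry $m\pm1$ is graphical when the sum is even and the entries lie in $[0,n-1]$. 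I would verify this with the Erd\H{o}s--Gallai criterion, which is easy for sequences whose entries differ by at most one, or by adding or removing a single edge in a suitable near-regular construction.
\end{itemize}
The boundary cases need separate checks. If $m=0$ and $\{np\}<\tfrac12$, then $m-1<0$ is impossible. But then $nm=0$ is even, so case (iii) does not occur. Similarly $m+1\le n-1$ holds because $np<n$ and $np\notin\mathbb{Z}$ give $m\le n-1$; if $m=n-1$, then $n(n-1)$ is even, so case (iii) cannot arise there either. In case (i), with $np\le n-1$ when $p<1$, every $\{np-1,np\}$-sequence with even sum is graphical. Erd\H{o}s--Gallai applied to near-regular sequences gives this, and it is exactly what ensures the listed graphs are realizable. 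These graphicality verifications are the only nonroutine part of the proof.
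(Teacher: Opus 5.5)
Your proposal is correct and follows the same route as the paper. Both reduce via Proposition~\ref{Prop:Ffunction} to a separable convex integer problem, take each degree nearest to $np-\tfrac12$, use the even-sum constraint to force a single cheapest deviation in case (iii), and appeal to Erd\H{o}s--Gallai (or explicit constructions) for realizability. The paper only sketches this; your marginal costs $g(m+1)-g(m)=2-2\{np\}$ and $g(m-1)-g(m)=2\{np\}$, together with the boundary checks that $m\pm1\in[0,n-1]$, supply exactly the details it omits.
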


\begin{remark} {\rm
The proof follows directly from the discussion above: 
the objective forces each degree to be as close as possible to $np-\tfrac12$, 
and the graphicality constraints reduce to the parity conditions encoded in the three cases.  
The verification that these degree sequences are realizable follows directly from the Erd\H{o}s-Gallai 
theorem (see, e.g., \cite{Tripathi2003}), 
which gives a necessary and sufficient condition for a non-negative integer sequence to be graphical; 
the details are straightforward and thus omitted for brevity.}   
\end{remark}

Theorem~\ref{theo:frechet} reveals that every Fr\'echet mean of $G_{n,p}$ is a \emph{quasi‑regular} graph
(i.e., all vertex degrees in it differ by at most one).  
This reflects the well‑known homogeneity of the Erd\H{o}s-R\'enyi random graph.  
In particular, case~(ii) asserts that whenever $n\lfloor np\rfloor$ is even, every $\lfloor np\rfloor$‑regular graph (see \cite{bose1963strongly,Bollobas2001}) is a Fr\'echet mean of $G_{n,p}$.
For enumeration results on regular graphs, we refer to 
\cite{Bollobas1980,hofstad2016random}.
Two extreme regimes are worth highlighting:
\begin{itemize}
\item If $np<1$, then $\lfloor np\rfloor=0$ and $n\lfloor np\rfloor=0$ is even, 
so case~(ii) applies and the only possible degree sequence is all zeros.  
Hence the Fr\'echet mean set consists solely of the empty graph.

\item If $n(1-p)<1$ (i.e., $p>1-1/n$), then $np>n-1$.  
Consequently $\lfloor np\rfloor=n-1$ and $n\lfloor np\rfloor=n(n-1)$ is always even, 
so again case~(ii) applies and every vertex must have degree $n-1$.  Thus the only Fr\'echet mean is the complete graph $K_n$.
\end{itemize}
Apart from these two boundary situations, 
the Fr\'echet mean is never unique provided $n$ is sufficiently large (typically $n\ge 4$).  
This non-uniqueness follows from the fact that the degree sequences considered 
in the theorem can correspond to multiple non-isomorphic graphs.

\section{First two moments of the squared Frobenius distance}

The purpose of this paper is to investigate the asymptotic behavior of the Frobenius distance between an
Erd\H{o}s-R\'enyi random graph $G_{n,p}$ and an arbitrary element of its Fr\'echet mean set as $n\to\infty$.
In what follows, we may only focus on the case where $np$ is not an integer and $n\lfloor np\rfloor$ is even,
which satisfies Theorem \ref{theo:frechet}(ii).
Typically, we let $p=p(n)\in (0,1)$ depend on $n$.

For ease of notation, we set $m=\lfloor np\rfloor$ hereafter.
Let $R\in \mathcal{G}_n$ be an arbitrary but fixed $m$-regular graph 
with adjacency matrix $M=(m_{ij})\in \mathcal{A}_n$.
From Theorem \ref{theo:frechet}, the graph $R$ is a Fr\'echet mean of a random graph $G_{n,p}$.
Define the Frobenius distance to this mean as
\begin{equation*}
   F_n=d_{\text{F}}(G_{n,p}, R). 
\end{equation*}
Although the distribution of $F_n$ may depend on $R$, 
we will show that the asymptotic distribution of $F_n^2$, 
as well as its mean and variance, does not depend on the particular choice of the $m$-regular graph $R$.

Recall that $A_{n,p} = (I_{ij})$ and $D_{n,p} = \operatorname{diag}(X_1, X_2,\dots, X_n)$
denote the adjacency and degree matrices of $G_{n,p}$, respectively,
where $I_{ij}$ is the edge indicator and $X_i = \sum_{j \neq i} I_{ij}$.
Since each vertex degree $D_i$ in $R$ is equal to $m$, 
it follows by \eqref{dF2GGnp2} that the squared distance
\begin{align}\label{sqFn}
F_n^2 &= \sum_{i=1}^n \bigg( m^2 + m - 2(m - 1)X_i - 2 \sum_{j \neq i} m_{ij} I_{ij} 
         + \sum_{j\neq i}\sum_{k\neq i,j}I_{ij} I_{ik} \bigg) \notag \\
      &= m(m + 1)n - 2U_n +W_n,
\end{align}
where 
\[
U_n=\sum_{i=1}^n\sum_{j\neq i} (m-1+m_{ij}) I_{ij} \quad \mbox{and} \quad  
W_n=\sum_{i=1}^n\sum_{j\neq i}\sum_{k\neq i,j} I_{ij} I_{ik}.
\]
Note that the sum in $U_n$ counts each undirected edge twice, 
so $U_n/2$ is the total weight of edges in $G_{n,p}$ with weights $m-1+m_{ij}$.
Similarly, $W_n$ counts ordered triples $(i,j,k)$ with $j\ne k$, 
which corresponds to twice the number of wedges
(a wedge is an unordered triple $(i,j,k)$ such that both $ij$ and $ik$ are edges).

From \eqref{sqFn}, one can see that it is more convenient to study the squared distance $F_n^2$ but not $F_n$ directly.
For the first two moments of $F_n^2$, we have the following.

\begin{proposition}\label{meanvarFn2}
For the squared Frobenius distance $F_n^2$,
\begin{align}
 \E[F_n^2] & =(np-m)(np-m-1)n+(3n-2)np(1-p),   \label{EFn2}\\  
 \mathrm{Var}(F_n^2) &=2np(1-p)\big[4(np-m+1-3p)^2n+(n^2+n+18)p(1-p)+(2np-2m-1)^2-5\big]. \label{VarFn2}
\end{align} 
\end{proposition}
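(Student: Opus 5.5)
The mean comes straight from Proposition~\ref{Prop:Ffunction}: $\E[F_n^2]=f(R)$ with every $D_i=m$, so $\E[F_n^2]=c+n[m^2-(2np-1)m]$, where $c=2n(n-1)p+n(n-1)(n-2)p^2$. Completing the square in $m$ gives $n(np-m)(np-m-1)$ plus $c-n(n^2p^2-np)$. A short expansion reduces the remainder to $(3n-2)np(1-p)$, which proves \eqref{EFn2}.

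For the variance I will work with the representation \eqref{sqFn}, $F_n^2=\text{const}-2U_n+W_n$. Both $U_n$ and $W_n$ are polynomials in the independent Bernoulli edge indicators $I_e$, $e=\{i,j\}$. Write $\xi_e=I_e-p$, so that $\E\xi_e=0$, $\E\xi_e^2=p(1-p)$, and distinct $\xi_e$ are independent. The natural tool is the Hoeffding (orthogonal) decomposition in the $\xi_e$:
\[
U_n=\text{const}+\sum_{e}2(m-1+m_e)\,\xi_e,\qquad
W_n=\text{const}+\sum_e a_e\,\xi_e+2\sum_{e\sim e'}\xi_e\xi_{e'},
\]
where the last sum runs over unordered pairs of distinct edges sharing a vertex (wedges). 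To find $a_e$, substitute $I_{ij}I_{ik}=(\xi_{ij}+p)(\xi_{ik}+p)$. Each edge $e=\{i,j\}$ lies in $2(n-2)$ wedges, each counted twice in $W_n$, so $a_e=4(n-2)p$.

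Combining, the linear coefficient of $F_n^2$ at edge $e$ is $b_e=4(n-2)p-4(m-1+m_e)$, and the quadratic part is $2\sum_{e\sim e'}\xi_e\xi_{e'}$. Linear and quadratic chaoses are orthogonal, and distinct quadratic terms are orthogonal to one another. Hence
\[
\mathrm{Var}(F_n^2)=p(1-p)\sum_e b_e^2+4\,[p(1-p)]^2\cdot\#\{\text{wedges}\}.
\]
The number of wedges is $n\binom{n-1}{2}$.

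For the linear sum, use the regularity of $R$: exactly $nm/2$ edges have $m_e=1$, and $\binom n2-nm/2$ have $m_e=0$. The coefficient is $b_e=-4(m-np+1-2p)$ when $m_e=0$, and $-4(m-np+2-2p)$ when $m_e=1$. This is why the answer does not depend on the choice of $R$. It remains to expand $16p(1-p)\bigl[(\binom n2-\tfrac{nm}{2})x^2+\tfrac{nm}{2}(x+1)^2\bigr]$ with $x=m-np+1-2p$, add $4p^2(1-p)^2\cdot n(n-1)(n-2)/2$, and regroup into the form \eqref{VarFn2}.

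I expect this last regrouping to be the main obstacle. The target is written around $(np-m+1-3p)^2$ and $(2np-2m-1)^2$, so I will substitute $\delta=np-m$ and expand both sides as polynomials in $\delta$, $n$, $p$ to match coefficients. I will also check small cases (for example $n=2$ or $3$ with $m=0$) numerically, to catch bookkeeping errors in the factor of $2$ from ordered versus unordered wedges.
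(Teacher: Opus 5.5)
Your route is valid and genuinely different from the paper's, but as written the last step would fail because of a sign slip.

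\textbf{The sign slip.} Your general coefficient $b_e=4(n-2)p-4(m-1+m_e)$ is correct. But for an edge with $m_e=0$ it gives $b_e=4(np-m+1-2p)$, not $-4(m-np+1-2p)$: the term $1-2p$ carries the same sign as $np-m$. Put $y=np-m+1-2p$. Then $b_e=4y$ if $m_e=0$ and $b_e=4(y-1)$ if $m_e=1$. This agrees with the coefficient $-4(m-np-1+m_{ij}+2p)$ of $I_{ij}$ in the paper's $-2\widetilde U_n$.

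With your $x$ the expansion cannot match \eqref{VarFn2}. For $n=2$, $m=0$ it yields $16p(1-p)(1-4p)^2$, whereas $F_2^2=4I_{12}$ has variance $16p(1-p)$. So the small-case check you planned would have caught it.

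\textbf{Corrected computation.} With the corrected coefficient,
\[
p(1-p)\sum_e b_e^2=16p(1-p)\Big[\binom n2 y^2+\frac{nm}{2}(1-2y)\Big],
\]
so that
\[
\mathrm{Var}(F_n^2)=2np(1-p)\big[4(n-1)y^2+4m(1-2y)+(n-1)(n-2)p(1-p)\big].
\]
Substitute $m=np-\delta$ and $y=\delta+1-2p$. The difference between this bracket and the bracket in \eqref{VarFn2} then vanishes identically. Your derivation of the mean from Proposition~\ref{Prop:Ffunction} is correct.

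\textbf{Comparison with the paper.} The paper computes three quantities separately and then combines $4\mathrm{Var}(U_n)+\mathrm{Var}(W_n)-4\mathrm{Cov}(U_n,W_n)$, followed by algebra:
\begin{itemize}
\item $\mathrm{Var}(U_n)$;
\item $\mathrm{Var}(W_n)$, by counting the $3\binom n3(2n-5)$ pairs of wedges sharing one edge;
\item $\mathrm{Cov}(U_n,W_n)$, using $\sum_{j\ne i}\sum_{k\ne i,j}(m_{ij}+m_{ik})=2m(n-2)$.
\end{itemize}
Your orthogonal decomposition in $\xi_e=I_e-p$ diagonalizes the variance in one step and avoids all covariance bookkeeping. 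It also makes the invariance in $R$ transparent, since $R$ enters only through its number of edges $nm/2$.

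It further explains later structure in the paper. Up to a constant, $W_n-4(n-2)pE_n$ equals the pure second-order term $2\sum_{e\sim e'}\xi_e\xi_{e'}$. Its variance $2n(n-1)(n-2)p^2(1-p)^2$ is the leading term in the dense regime. This is why the coefficient $4(n-2)p$ in \eqref{wtFn22} is the natural choice, and why the first-order joint CLT for $(E_n,W_n)$ degenerates on that combination.
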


\begin{proof}
We derive the mean and variance of $F_n^2$ via the first two moments of random vector $(U_n,W_n)$.

Since $\sum_{j\neq i}m_{ij}=m$ for each $i\in[n]$, 
\begin{align}\label{EUn}
  \E[U_n] &= \sum_{i=1}^n\sum_{j\neq i} (m-1+m_{ij}) p\notag\\
          &= \sum_{i=1}^n[(n-1)(m-1)p+mp]\notag\\
          &=(mn-n+1)np.
\end{align}
Noting that there are $3\binom{n}{3}$ possible wedges in $G_{n,p}$ and each of them occurs 
with probability $p^2$, we have
\begin{align}\label{EWn}
\E[W_n]=2 \cdot 3\binom{n}{3}p^2=n(n-1)(n-2)p^2.
\end{align}
Taking expectations on both sides of \eqref{sqFn} gives
\begin{align*} 
\E[F_n^2]= m(m+1)n - 2\E[U_n] + \E[W_n].
\end{align*}
By \eqref{EUn} and \eqref{EWn}, after straightforward calculations we have
\begin{align*}
 \E[F_n^2] & = m(m+1)n - 2(mn - n + 1)np + n(n-1)(n-2)p^2\\
                   & = \big[(np-m)^2 -(np-m)+(3n-2)p(1-p)\big]n,
\end{align*}
which proves \eqref{EFn2}.

Since each edge indicator $I_{ij}$ appears twice in $U_n$, for the variance of $U_n$ we have
\begin{align}\label{VarUn}
  \mathrm{Var}(U_n) &= 2\sum_{i=1}^n\sum_{j\neq i} (m-1+m_{ij})^2 p(1-p)\notag\\
          &=  2p(1-p)\sum_{i=1}^n\sum_{j\neq i}[(m-1)^2+(2m-1)m_{ij}] \qquad (\text{by }  m_{ij}^2=m_{ij})\notag\\
          &=2np(1-p)[(m-1)^2(n-1)+(2m-1)m]\notag\\
          &=2np(1-p)[n(m-1)^2+m^2+m-1].
\end{align}

For $\mathrm{Var}(W_n)$, note that $W_n/2$ is the number of wedges,
and any two wedge indicators are independent if they do not share common edges.
With a basic combinatorial argument, one can see that in a complete graph on $n$ vertices,
there are $3\binom{n}{3}(2n-5)$ pairs of wedges sharing exactly one edge.  
By symmetry and independence, we thus have
\begin{align*}
\mathrm{Var}\Big(\frac{W_n}2\Big) 
&= 3\binom{n}{3}\cdot\mathrm{Var}(I_{12}I_{13})+2\cdot 3\binom{n}{3}(2n-5) \cdot\mathrm{Cov}(I_{12}I_{13},I_{12}I_{14})\notag\\
&=\frac12n(n-1)(n-2)p^2(1-p^2)+n(n-1)(n-2)(2n-5)p^3(1-p),
\end{align*}
which implies that
\begin{align}\label{VarWn}
 \mathrm{Var}(W_n)=2n(n-1)(n-2)p^2(1-p)[1+(4n-9)p].
\end{align}

Similarly to \eqref{VarWn}, for the covariance we have
\begin{align}\label{CovUnVn}
\mathrm{Cov}(U_n,W_n)
&=2\sum_{i=1}^n\sum_{j\neq i}\sum_{k\neq i,j}\big[\mathrm{Cov}\big((m-1+m_{ij})I_{ij},I_{ij}I_{ik}\big)
   +\mathrm{Cov}\big((m-1+m_{ik})I_{ik},I_{ij}I_{ik}\big)\big]\notag\\
&=4(m-1)\cdot 6\binom{n}{3}\mathrm{Cov}(I_{12},I_{12}I_{13})
       +2\sum_{i=1}^n\sum_{j\neq i}\sum_{k\neq i,j}(m_{ij}+m_{ik})\mathrm{Cov}(I_{12},I_{12}I_{13})\notag\\
&=4(m-1)n(n-1)(n-2)p^2(1-p)+2p^2(1-p)\sum_{i=1}^n\sum_{j\neq i}\sum_{k\neq i,j}(m_{ij}+m_{ik})\notag\\
&=4(m-1)n(n-1)(n-2)p^2(1-p)+4p^2(1-p)mn(n-2)\notag\\
&=4n(n-2)p^2(1-p)(mn-n+1),
\end{align}
where in the ‌penultimate equality we used the following identity: for each $i\in [n]$,
\[
\sum_{j\neq i}\sum_{k\neq i,j}(m_{ij}+m_{ik})=2m(n-2),
\]
which, in fact, is due to that $\sum_{j\neq i}m_{ij}=m$, and 
each term $m_{ij}~ (j\ne i)$ appears $2(n-2)$ times in the double sum.

Also by \eqref{sqFn}, we have 
\begin{align*}
\mathrm{Var}(F_n^2)=4\mathrm{Var}(U_n)+\mathrm{Var}(W_n)-4\mathrm{Cov}(U_n,W_n).
\end{align*}
Substituting \eqref{VarUn}--\eqref{CovUnVn} into this yields
\begin{align*}
 \mathrm{Var}(F_n^2) &=2np(1-p)\big[4(m-1)^2n+4(m^2+m-1)-(8mn-9n+9)(n-2)p\notag\\
                    &\quad +(n-1)(n-2)(4n-9)p^2\big], 
\end{align*}
which, together with some basic algebra, implies \eqref{VarFn2}.
\end{proof}

The first two moments of $F_n^2$ given in \eqref{EFn2} and \eqref{VarFn2} 
lead to the following weak convergence results.

\begin{proposition}\label{Prop:weak}
Let $F_n$ be the Frobenius distance between $G_{n,p}$ and an arbitrary Fr\'echet mean graph.  
\begin{itemize}
\item[(i)] If $n^2p(1-p)\rightarrow 0$, then $F_n\xrightarrow{P}0$.
\item[(ii)] If $n^2p(1-p)\rightarrow \lambda$ for some constant $\lambda>0$, then
\[F_n\xrightarrow{D}2\sqrt{\mathrm{Poi}\Big(\frac{\lambda}2\Big)},\]
where $\mathrm{Poi}(\frac {\lambda}2)$ denotes a Poisson random variable with mean $\frac {\lambda}2$. 
\item[(iii)] If $n^2p(1-p)\rightarrow \infty$, then
\begin{equation}\label{limEFn1}
    \frac{F_n}{\sqrt{\E[F_n^2]}} \xrightarrow{P} 1.
\end{equation}
In particular, if we further have that $np(1-p)$ converges to a non-negative limit or diverges to infinity, 
\begin{equation}\label{limEFn2par}
    \frac{F_n}{\sqrt{n^2p(1-p)}} \xrightarrow{P} \sqrt{a},
\end{equation}
where the constant
\begin{equation}\label{Def:a}
a=\begin{cases}
    2, & \text{if } np(1-p)\to 0;\\
    2+\dfrac{\big(c-\lfloor c\rfloor\big)^2+\lfloor c\rfloor}{c}, &\text{if }  np(1-p)\to c \text{ for some constant } c>0;\\
    3, & \text{if } np(1-p)\to\infty.
\end{cases}
\end{equation}
\end{itemize}
\end{proposition}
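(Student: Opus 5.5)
Throughout I would first reduce to $p\le \tfrac12$ by complementation. The complement of $G_{n,p}$ is distributed as $G_{n,1-p}$, and by \eqref{basicpdF} together with Proposition~\ref{Prop:Ffunction}, $G\mapsto\overline{G}$ maps the Fr\'echet mean set of $G_{n,p}$ onto that of $G_{n,1-p}$. Concretely, if $np\notin\mathbb{Z}$ then $\lfloor n(1-p)\rfloor=n-1-m$, and the complement of an $m$-regular graph is $(n-1-m)$-regular. Hence $F_n$ for $(n,p,R)$ has the same law as $F_n$ for $(n,1-p,\overline{R})$. All hypotheses and conclusions involve only $p(1-p)$, so it suffices to treat $p\le\tfrac12$; along a general sequence I would split into the subsequences with $p\le \tfrac12$ and $p>\tfrac12$. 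With $p\le\tfrac12$ we have $p(1-p)\asymp p$. All moment computations use Proposition~\ref{meanvarFn2}. Write $x=np-m\in[0,1)$, so that $\E[F_n^2]=n\big[x^2-x+(3n-2)p(1-p)\big]$.

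\emph{Parts (i) and (ii).} In (i), $n^2p\to0$ forces $np<1$, so $m=0$, $x=np$, and $\E[F_n^2]=n^2p^2-n^2p+(3n-2)np(1-p)\le 2n^2p\to0$. Markov's inequality then gives $F_n\xrightarrow{P}0$. In (ii), again $m=0$ eventually and $R$ is the empty graph. By \eqref{dF2GGnp} this gives $F_n^2=\sum_i X_i^2+2E_n$, where $E_n$ is the number of edges of $G_{n,p}$. Since $E_n\sim\mathrm{Bin}(\binom n2,p)$ with $\binom n2 p\to\lambda/2$, we get $E_n\xrightarrow{D}\mathrm{Poi}(\lambda/2)$. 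The expected number of wedges is $3\binom n3p^2=O(n^3p^2)=O(\lambda^2/n)\to0$, so with probability tending to one no two edges share a vertex. On that event every $X_i\in\{0,1\}$, so $\sum_iX_i^2=\sum_iX_i=2E_n$ and $F_n^2=4E_n$. The continuous mapping theorem then yields $F_n\xrightarrow{D}2\sqrt{\mathrm{Poi}(\lambda/2)}$.

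\emph{Part (iii).} I would use Chebyshev: it suffices to show $\mathrm{Var}(F_n^2)/(\E[F_n^2])^2\to0$, since then $F_n^2/\E[F_n^2]\xrightarrow{P}1$ and taking square roots gives \eqref{limEFn1}. The key lower bound is $\E[F_n^2]\ge c_0\,n^2p(1-p)$ for large $n$. If $m=0$, then $x^2-x=n^2p^2-np\ge -np$, so $\E[F_n^2]\ge 2n^2p(1-p)-O(np)$. If $m\ge1$, then $np\ge1$ and so $np(1-p)\ge\tfrac12$; using $x^2-x\ge-\tfrac14$ gives $\E[F_n^2]\ge n\big[(3n-2)p(1-p)-\tfrac14\big]\ge c_0n^2p(1-p)$. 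For the upper bound, \eqref{VarFn2} has every bracketed term bounded except the $n$ and $n^2p(1-p)$ terms, so $\mathrm{Var}(F_n^2)=O\big(n^2p(1-p)+n^3p^2(1-p)^2\big)$. Dividing by $n^4p^2(1-p)^2$ gives $O\big(1/(n^2p(1-p))+1/n\big)\to0$. This lower bound on the mean, uniform over the arithmetic of $m$, is the step needing most care; the rest is bookkeeping.

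\emph{The constant $a$.} For \eqref{limEFn2par} it remains to compute the limit of
\[
\frac{\E[F_n^2]}{n^2p(1-p)}=3-\frac{2}{n}+\frac{x^2-x}{np(1-p)} .
\]
If $np(1-p)\to0$, then $np\to0$, so $m=0$, $x=np$, and $(x^2-x)/(np(1-p))\to-1$, giving $a=2$. If $np(1-p)\to c>0$, then $p\to0$ and $np\to c$. The function $g(y)=(y-\lfloor y\rfloor)^2-(y-\lfloor y\rfloor)$ is continuous, because it vanishes at integers from both sides, so $x^2-x=g(np)\to g(c)$ even when $c$ is an integer. Hence the limit is $3+g(c)/c$, which simplifies to the stated $2+\big((c-\lfloor c\rfloor)^2+\lfloor c\rfloor\big)/c$. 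If $np(1-p)\to\infty$, the bounded numerator gives $a=3$. Combining this with \eqref{limEFn1} completes the proof.
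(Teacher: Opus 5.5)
Your proposal is correct and follows essentially the same route as the paper: you reduce to $p\le\tfrac12$ by complementation, use $\E[F_n^2]\to0$ plus Markov for (i), use $F_n^2=4E_n+W_n$ with a negligible wedge term and the Poisson limit of $E_n$ for (ii), and use Chebyshev with \eqref{EFn2}--\eqref{VarFn2} for (iii). Your explicit lower bound on $\E[F_n^2]$ and the continuity of $y\mapsto\{y\}^2-\{y\}$ (which also covers integer $c$) fill in details the paper calls ``straightforward''; the only slip is that in (i) the first term of $\E[F_n^2]$ with $m=0$ is $n^3p^2$, not $n^2p^2$, which does not affect $\E[F_n^2]=O(n^2p)\to0$.
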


\begin{proof}
For any Fr\'echet mean $R$ of $G_{n,p}$, it follows by \eqref{basicpdF} that
\begin{equation*}
F_n=d_{\text{F}}(\overline{G}_{n,p},\overline{R}),
\end{equation*}
where $\overline{R}$ is the complement graph of $R$.
This implies that $\overline{R}$ is also a Fr\'echet mean of $\overline{G}_{n,p}$.
Consequently, without loss of generality we can always assume $0<p\le \frac12$.
Especially, we may let $p\to 0$ in (i) and (ii), 
since if $n^2p(1-p)\to c$ for some constant $c\ge0$, the edge probability $p$ must tend to 0 or 1.
It should also be noted that, under the condition $n^2p\to c<\infty$, 
the Fr\'echet mean $R$ is unique to being empty for sufficiently large $n$
(see the comments below Theorem \ref{theo:frechet}).

When $n^2p\to 0$, we have $mn\to 0$ and $np\to 0$.
It thus follows by \eqref{EFn2} that $\E[F_n^2]$ tends to 0. 
Therefore, the squared distance $F_n^2$ converges in probability to 0, which proves (i).

Consider (ii). Since $R$ is empty for sufficiently large $n$ (i.e., $m=m_{ij}=0$), 
we can simplify \eqref{sqFn} to
\[
F_n^2=4E_n+W_n,
\]
where $E_n=\sum_{1\le i<j\le n}I_{ij}$ denotes the number of edges in $G_{n,p}$. 
It follows by \eqref{EWn} that $\E[W_n]$ tends to 0, 
which implies that the count $W_n$ converges in probability to 0. 
From the known result in \cite[Theorem 3.19]{janson2000}, 
$E_n$ converges in distribution to $\mathrm{Poi}(\lambda/2)$. Thus,
(ii) follows by Slutsky's theorem and the continuous mapping theorem (see, e.g., \cite{Durrett2010}).

To prove \eqref{limEFn1}, it suffices to establish the convergence
\begin{align}\label{weakconv}
\frac{F_n^2}{\E[F_n^2]}\xrightarrow{P}1,
\end{align}
under the condition $n^2p(1-p)\rightarrow \infty$, also by virtue of the continuous mapping theorem.  
To this end, we rely on Proposition \ref{meanvarFn2}
and analyze the orders of magnitude of $\E[F_n^2]$ and $\mathrm{Var}(F_n^2)$, respectively.

From \eqref{EFn2}, the order of the mean can be directly characterized as 
\begin{equation}\label{orderEFn2}
\E[F_{n}^2]=\Theta\big(n^2p(1-p)\big),
\end{equation}
as $n^2p(1-p)\rightarrow \infty$.
In particular, if we further have that $np(1-p)$ converges to a non-negative limit $c\ge0$ or diverges to infinity,  
straightforward calculations gives
\begin{equation}\label{EFnasya}
\frac{\E[F_{n}^2]}{n^2p(1-p)}\to a,
\end{equation}
where the constant $a\in [2,3]$ is defined in \eqref{Def:a}. 

Next, we analyze the variance using \eqref{VarFn2}.
Through basic algebraic simplification and order analysis, we have that as $n^2p(1-p)\rightarrow \infty$,
\begin{align}\label{VarFn22}
 \mathrm{Var}(F_n^2) \sim 2np(1-p)\big[4(np-m+1-3p)^2n+n^2p(1-p)].
\end{align}
By the definition of the integer $m$, we have that the term $np-m+1-3p\in (-2,2)$, 
which bounds the coefficient $4(np-m+1-3p)^2$ by a positive absolute constant.
Under the given condition $n^2p(1-p)\rightarrow \infty$,  
by \eqref{orderEFn2}  there exists an absolute constant $C>0$  such that
\begin{align*}
\frac{\mathrm{Var}(F_n^2)}{(\E[F_n^2])^2} &\le \frac{Cnp(1-p)[n+n^2p(1-p)]}{[n^2p(1-p)]^2}
=\frac{C}{n^2p(1-p)}+\frac{C}{n}\to 0,
\end{align*}
which proves \eqref{weakconv} by Chebyshev's inequality.

Furthermore, the convergence result \eqref{limEFn2par} follows immediately from \eqref{limEFn1} and Slutsky's Theorem.
This completes the proof of Proposition \ref{Prop:weak}.
\end{proof}

\begin{remark} {\rm
From \eqref{VarFn22}, one can conduct a refined order-of-magnitude analysis of the variance $\mathrm{Var}(F_n^2)$
under the condition that $np(1-p) \to c\ge0$
 (for a constant $c$) or $np(1-p)\rightarrow\infty$. 
More precisely, the asymptotic equivalence of the variance is given by
\begin{align}\label{VarFn2asy}
\mathrm{Var}(F_n^2)\sim \begin{cases}
 8n^2p(1-p), &\text{if } np(1-p)\rightarrow 0;\\
 2c\big[4(c-\lfloor c\rfloor+1)^2+c\big]n, &\text{if } np(1-p)\rightarrow c 
    \text{ for some non-integer constant } c>0;\\
 2n^3p^2(1-p)^2, & \text{if }  np(1-p)\rightarrow\infty.
\end{cases}
\end{align}
It is critical to note that when $c$ is a positive integer, 
the ratio $\mathrm{Var}(F_n^2)/n$ does not possess a unique limit, 
which we rigorously verify via the following asymptotic argument.
First, suppose that $np$ is monotonically decreasing and converges to a positive integer $c$. 
Since $np$ is non-integer for all $n\ge1$ by assumption, we have $np>c$ for every $n\ge1$,
and thus $m=c$ for all sufficiently large $n$. 
Substituting $m=c$ into \eqref{VarFn22} and taking the asymptotic limit, we obtain
\[
\frac{\mathrm{Var}(F_n^2)}{n}\to 2c(4+c),
\]
as $n\to\infty$. 
Conversely, if $np$ is monotonically increasing and converges to the positive integer $c$, 
we have $m = \lfloor np \rfloor = c-1$ for all sufficiently large $n$, which yields a distinct limit
 \[\frac{\mathrm{Var}(F_n^2)}{n}\to 2c(16+c).\]
Accordingly, we conclude that $\mathrm{Var}(F_n^2)=\Theta(n)$ when \(c\) is a positive integer, 
but there exists no universal constant $a$ such that \(\mathrm{Var}(F_n^2)\sim an\) as $n\to\infty$.
}    
\end{remark}

\section{Asymptotic normality}
In this section, we establish the asymptotic normality for $F_n^2$ under the condition $n^2p(1-p)\to \infty$. 
Proposition \ref{Prop:weak} implies that when $n^2p(1-p)$ fails to diverge to infinity,
$F_n$ cannot be asymptotically normal under any normalization. 
We first state the main results of this paper in the following.

\begin{theorem}\label{Thm:Fn2}
For the distance $F_n = d_{\mathrm{F}}(G_{n,p}, R)$ with an arbitrary Fr\'echet mean graph $R$, we have that as $n^2p(1-p) \to \infty$,
\begin{equation}\label{Fn2AN1}
 \frac{F_n^2-\E[F_n^2]}{\sqrt{\mathrm{Var}(F_n^2)}} \xrightarrow{D} N(0,1).    
\end{equation}
In particular, the following assertions hold.
\begin{itemize}
     \item[(i)] If $np(1-p)\to 0$ and $n^2p(1-p)\to \infty$, then
    \[
    \frac{F_n^2-\E[F_n^2]}{\sqrt{n^2p(1-p)}} \xrightarrow{D} N(0,8). 
    \]
    \item[(ii)] If $np(1-p)\to c$ for some non-integer constant $c>0$, then
    \[
    \frac{F_n^2-\E[F_n^2]}{\sqrt{n}} \xrightarrow{D} 
    N\big(0, 2c\big[4(c-\lfloor c\rfloor+1)^2+c\big]\big).
    \]
    \item[(iii)] If $np(1-p)\to \infty$, then
    \[
    \frac{F_n^2-\E[F_n^2]}{\sqrt{n^3p^2(1-p)^2}} \xrightarrow{D} N(0,2). 
    \]
\end{itemize}
\end{theorem}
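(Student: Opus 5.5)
By the complement symmetry \eqref{basicpdF}, used as in the proof of Proposition \ref{Prop:weak}, we may assume $0<p\le\frac12$. Once \eqref{Fn2AN1} is established, parts (i)--(iii) follow from Slutsky's theorem together with the variance asymptotics \eqref{VarFn2asy}. So the real work is \eqref{Fn2AN1}. The starting point is the decomposition \eqref{sqFn}, $F_n^2=m(m+1)n-2U_n+W_n$. It writes $F_n^2$ as a polynomial of degree two in the independent edge indicators $I_{ij}$: a weighted edge count plus twice the wedge count. We write $F_n^2-\E[F_n^2]=\sum_{\alpha}Y_\alpha$ over $\alpha$ ranging over edges and wedges of $K_n$. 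Here $Y_{ij}=-4(m-1+m_{ij})(I_{ij}-p)$ and $Y_{\{ij,ik\}}=2(I_{ij}I_{ik}-p^2)$. We then treat the two regimes separately, as the introduction indicates.

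\emph{Sparse and very sparse regimes ($np$ bounded).} Here we use a dependency graph: $\alpha\sim\beta$ when the corresponding edge sets intersect. Each edge term is adjacent to $O(n)$ wedge terms and each wedge term to $O(n)$ terms, so the maximal degree is $\Delta=O(n)$. We apply a standard Stein/dependency-graph bound, e.g. Baldi--Rinott or Chen--Goldstein--Shao, of the form
\[
d_W\le C\,\frac{\Delta^2\sum_\alpha \E|Y_\alpha|^3}{\sigma^3}+C\,\frac{\sqrt{\Delta^3\sum_\alpha\E Y_\alpha^4}}{\sigma^2},\qquad \sigma^2=\mathrm{Var}(F_n^2).
\]
Plugging in the crude $\Delta=O(n)$ fails, so we refine by exploiting sparsity. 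With $m=O(1)$, the edge terms contribute $\sum\E|Y_{ij}|^3=O(n^2p)$ and the wedge terms $O(n^3p^2)$. Instead of the worst-case degree we sum over actually dependent pairs and triples, weighting by the probabilities that the indicators are nonzero. Equivalently, we use the local-dependence bound of Chen--Shao with neighbourhoods $A_\alpha\subset B_\alpha$ and estimate $\E|Y_\alpha\sum_{\beta\in A_\alpha}Y_\beta\sum_{\gamma\in B_\alpha}Y_\gamma|$ directly. Each such triple expectation carries a factor $p$ per distinct edge. Counting configurations then gives a bound of order $(n^2p+n^3p^2+n^4p^3)/\sigma^3$. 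By \eqref{VarFn22}, $\sigma^2\asymp n^2p$ when $np\to0$ and $\sigma^2\asymp n$ when $np\to c$, so the bound is $O((n^2p)^{-1/2})$ or $O(n^{-1/2})$ respectively, and both tend to zero. For $c$ an integer we only need $\sigma^2=\Theta(n)$, which the Remark gives.

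\emph{Dense regime ($np\to\infty$).} The dependency graph is now too dense, so we instead use a Hoeffding/Efron--Stein projection, i.e. a discrete-difference Stein bound in the spirit of Shao--Zhang. The Hájek projection of $W_n$ onto the edge variables is $\sum_{i<j}2\cdot 2(n-2)p(I_{ij}-p)$. Combined with $-2U_n$, the linear part of $F_n^2-\E F_n^2$ is
\[
L_n=\sum_{i<j}\bigl[4(n-2)p-4(m-1+m_{ij})\bigr](I_{ij}-p).
\]
Its coefficients are $O(1)$ since $np-m\in[0,1)$, so $\mathrm{Var}(L_n)=O(n^2p)$, which is negligible compared to $\sigma^2\asymp n^3p^2$. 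The dominant part is therefore the degenerate quadratic $Q_n=2\sum_{i}\sum_{j\ne k}(I_{ij}-p)(I_{ik}-p)$, with variance $\sim 2n^3p^2$ in agreement with \eqref{VarFn2asy}. Its asymptotic normality is a CLT for a degenerate U-statistic/quadratic form in independent centred variables. We prove it either by de Jong's fourth-moment theorem, showing $\E Q_n^4/\sigma^4\to3$ and that the maximal influence of any single edge is $o(1)$, or via the Shao--Zhang bound. The main obstacle is this fourth-moment computation. We organise it by counting connected edge multigraphs formed by four wedges. The leading $3\sigma^4$ comes from pairs of matched wedges, and all other configurations are smaller by factors of $1/(np)$ or $1/n$.

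Having shown $Q_n/\sigma\xrightarrow{D}N(0,1)$ and $L_n/\sigma\xrightarrow{P}0$, Slutsky's theorem gives \eqref{Fn2AN1} in the dense regime. The sparse and dense arguments overlap on all sequences with $n^2p\to\infty$, and a subsequence argument then yields \eqref{Fn2AN1} in full generality.
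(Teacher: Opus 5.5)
Your proposal is correct. In the sparse regimes it is essentially the paper's argument with a different lemma. You use the Chen--Shao local-dependence bound with triple moments, while the paper uses Janson's Theorem 6.33 with $Q_{1n}=O(n^2p)$ and $Q_{2n}=O(1)$. In both, the key point is the same: a connected configuration with $e$ distinct edges has at most $e+1$ vertices and costs $O(p^e)$. It therefore contributes $O(n(np)^e)=O(n^2p)$ when $np=O(1)$, which is negligible against $\sigma^3$.

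In the dense regime, your Hoeffding decomposition is the same reduction the paper makes, though written differently. The paper's choice of coefficient $4(n-2)p$ makes $W_n-4(n-2)pE_n$ equal to the degenerate component $\sum_i\sum_{j\ne k}(I_{ij}-p)(I_{ik}-p)$ up to an additive constant. So the paper's $h(\bm Z)$ is half of your $Q_n$ plus a constant, and $-2(\widetilde U_n-\E\widetilde U_n)$ is exactly your $L_n$.

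The genuine difference is the CLT tool applied to this degenerate quadratic form. The paper runs the Shao--Zhang discrete-difference bound along the sequential replacements $\bm Z^{A_s}$. This requires $\E V_n=1$, $\E V_n^*=0$ and the variance bounds of Lemma~\ref{upperbound}, including those for the absolute-value functional $V_n^*$. The payoff is an explicit Kolmogorov rate $O(n^{-1/2})$.

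Your de Jong route gives only convergence in distribution, unless you invoke a quantitative de Jong theorem. Its verification, however, is more transparent:
\begin{itemize}
\item The influence condition is immediate: $\max_e\sum_{e'}\sigma_{ee'}^2/\sigma^2=O(n^{-2})$.
\item In the fourth moment, the paired wedges give $3\sigma^4$.
\item Closed 4-walks in the line graph of $K_n$ number about $n^5$, each of weight $p^4(1-p)^4$, so their relative size is $O(1/n)$.
\item Patterns with an edge of multiplicity at least three have relative size $O(1/(n^2p))$ or smaller.
\end{itemize}
All corrections are negligible when $np\to\infty$, and de Jong's route sidesteps $V_n^*$ entirely.

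One small slip: with the sum over ordered pairs $j\ne k$, drop the prefactor $2$ in your $Q_n$, since as written its variance is $8n^3p^2(1-p)^2$. Also keep the factor $(1-p)^2$ in $\mathrm{Var}(Q_n)\sim 2n^3p^2(1-p)^2$, because $p$ need not tend to $0$ in the dense regime.
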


Theorem \ref{Thm:Fn2} reveals a  phase transition in the asymptotic behavior of $F_n^2$,
characterized by the scaling of $np(1-p)$, 
which determines the appropriate normalization and limiting variance.
By the symmetry \eqref{basicpdF}, 
without loss of generality we can assume $p\le \frac12$ in the proof of the theorem. 
Then, these three special regimes correspond, respectively, 
to very sparse regime ($np\to0$ but $n^2p\to\infty$), 
sparse  regime ($np\to c>0$), 
and dense regime ($np\to\infty$) for Erd\H{o}s-R\'enyi random graphs.
When $c$ is a positive integer,
the asymptotic normality of the above form requires additional constraints, 
as indicated in Remark 2; for instance, 
$np$ should converge to $c$ monotonically from above or below. 
Following the existing results, 
the precise formulation of the corresponding conclusions can be readily derived, 
and thus we omit the details here.

Applying the delta method (see, e.g., \cite[Chapter 3]{vaart1998}) to the results of Theorem \ref{Thm:Fn2}, 
we obtain the following asymptotic distributions for the Frobenius distance itself.

\begin{theorem}\label{Thm:Frobenius}
Under the same conditions of Theorem \ref{Thm:Fn2}, we have the following.
\begin{itemize}
     \item[(i)] If $np(1-p)\to 0$ and $n^2p(1-p)\to \infty$, then
    \[
    F_n-\sqrt{\E[F_n^2]} \xrightarrow{D} N(0,1). 
    \]
    \item[(ii)] If $np(1-p)\to c$ for some non-integer constant $c>0$, then
    \[
    F_n-\sqrt{\E[F_n^2]} \xrightarrow{D} 
    N\left(0, \frac{c\big[4(c-\lfloor c\rfloor+1)^2+c\big]}
    {2\big[(c-\lfloor c\rfloor)^2+\lfloor c\rfloor+2c\big]}\right).
    \]
    \item[(iii)] If $np(1-p)\to \infty$, then
    \[
    \frac{F_n-\sqrt{\E[F_n^2]}}{\sqrt{np(1-p)}} \xrightarrow{D} 
    N\Big(0,\frac16\Big). 
    \]
\end{itemize}
\end{theorem}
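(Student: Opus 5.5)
The plan is to deduce Theorem \ref{Thm:Frobenius} from Theorem \ref{Thm:Fn2} by a delta-method argument, carried out by hand since the centering $\sqrt{\E[F_n^2]}$ depends on $n$. Write $\mu_n=\E[F_n^2]$ and $\sigma_n^2=\mathrm{Var}(F_n^2)$. The starting point is the exact identity
\[
F_n-\sqrt{\mu_n}=\frac{F_n^2-\mu_n}{F_n+\sqrt{\mu_n}}
=\frac{F_n^2-\mu_n}{2\sqrt{\mu_n}}\cdot\frac{2\sqrt{\mu_n}}{F_n+\sqrt{\mu_n}}.
\]
By \eqref{limEFn1} of Proposition \ref{Prop:weak}(iii), $F_n/\sqrt{\mu_n}\xrightarrow{P}1$ whenever $n^2p(1-p)\to\infty$. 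Hence the second factor tends to $1$ in probability, and Slutsky's theorem reduces everything to the limit law of $(F_n^2-\mu_n)/(2\sqrt{\mu_n})$, after dividing by the appropriate deterministic scale.

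For that first factor, write $(F_n^2-\mu_n)/(2\sqrt{\mu_n}) = \big[(F_n^2-\mu_n)/s_n\big]\cdot s_n/(2\sqrt{\mu_n})$, where $s_n$ is the normalization used in the corresponding case of Theorem \ref{Thm:Fn2}. The limit of $s_n/(2\sqrt{\mu_n})$ comes from \eqref{EFnasya}, namely $\mu_n\sim a\,n^2p(1-p)$ with $a$ given in \eqref{Def:a}. The three cases go as follows.

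In case (i), take $s_n=\sqrt{n^2p(1-p)}$ and $a=2$. Then $s_n/(2\sqrt{\mu_n})\to 1/(2\sqrt2)$, and the variance becomes $8/8=1$.

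In case (ii), take $s_n=\sqrt n$. Here $\mu_n\sim a c\, n$ with $ac=2c+(c-\lfloor c\rfloor)^2+\lfloor c\rfloor$, so the variance is $2c[4(c-\lfloor c\rfloor+1)^2+c]/(4ac)$. This matches the stated expression.

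In case (iii), take $s_n=\sqrt{n^3p^2(1-p)^2}$ and $a=3$. Dividing additionally by $\sqrt{np(1-p)}$, the ratio $s_n/(2\sqrt{\mu_n}\sqrt{np(1-p)})$ tends to $1/(2\sqrt3)$. The limiting variance is then $2/12=1/6$.

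The only delicate point is the input $F_n/\sqrt{\mu_n}\to1$ in probability. Proposition \ref{Prop:weak} already supplies it, so no Taylor remainder has to be controlled. The standard delta method with a fixed centering is avoided, because $\sqrt{\mu_n}$ varies with $n$ and the exact algebraic identity above takes its place. The rest is routine bookkeeping of the constants.
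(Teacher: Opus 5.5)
Your proposal is correct and takes the same route as the paper. Both deduce the result from Theorem~\ref{Thm:Fn2} by linearizing $\sqrt{x}$ at $\E[F_n^2]$ and reading off $\mathrm{Var}(F_n^2)/(4\E[F_n^2])$ from the asymptotics of the mean and variance, and your constants agree in all three regimes. Where the paper just cites the delta method (loosely, since the centering $\sqrt{\E[F_n^2]}$ moves with $n$), you make that step rigorous via the exact identity $F_n-\sqrt{\E[F_n^2]}=(F_n^2-\E[F_n^2])/(F_n+\sqrt{\E[F_n^2]})$ together with $F_n/\sqrt{\E[F_n^2]}\xrightarrow{P}1$ from Proposition~\ref{Prop:weak}(iii).
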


\subsection{Proof of Theorem \ref{Thm:Fn2}(i) and (ii)}
\label{poofthm2}

This subsection and the one that follows are devoted to the proof of Theorem \ref{Thm:Fn2}. 
Throughout the entire proof, we assume without loss of generality that \(0<p\le \frac{1}{2}\). 
By a standard subsequence argument, 
it suffices to verify that the convergence \eqref{Fn2AN1} holds under each of the conditions (i), (ii), 
and (iii) to complete the proof of Theorem \ref{Thm:Fn2}.

We adopt different techniques to handle the three regimes in Theorem~\ref{Thm:Fn2}:
\begin{itemize}
    \item When $np(1-p)\to c$ for some constant $c\ge0$, 
    the random variables contributing to $F_n^2$ exhibit a sparse dependency structure. 
    In these cases, we construct a suitable dependency graph \cite{Baldi1989a,Baldi1989b} for the summands and 
    verify the conditions of Lemma~\ref{Lem:dependency} to establish asymptotic normality.
    \item When $np(1-p)\to\infty$, 
    the dependencies become too intricate for a simple dependency graph approach. 
    Instead, we employ recent Berry–Esseen bounds for functionals of independent random variables developed 
    by Shao and Zhang \cite{shao2025104574}, which provide sharper control on the convergence rate.
\end{itemize}
The proof for (i) and (ii) using dependency graphs is presented first; 
the proof for (iii) using Berry–Esseen bounds is given in the next subsection.

Let us first recall some fundamental concepts of the dependency graph for random variables. 
Let $\{X_i\}_{i\in \mathcal{I}}$ be a family of random variables defined on a common probability space.
A {\em dependency graph} for these random variables is any graph $L$ with vertex set $\mathcal{I}$ such that 
if $A$ and $B$ are two disjoint subsets of $\mathcal{I}$ with no edges between $A$ and $B$, 
then the families $\{X_i\}_{i\in A}$ and $\{X_j\}_{j\in B}$ are mutually independent.
For any positive integer $r$ and $i_1,i_2,\dots,i_r\in \mathcal{I}$, we also denote by
\[
\overline{N}_L(i_1,i_2,\dots,i_r)=\bigcup_{k=1}^r\{j\in\mathcal{I}:j=i_k \text{ or } 
j \text{ is adjacent to } i_k \text{ in } L\}
\]
the closed neighborhood of $\{i_1,i_2,\dots,i_r\}$ in $L$.

The following auxiliary lemma, stated as Theorem 6.33 in \cite{janson2000}, 
plays an important role in our proof of the asymptotic normality of $F_n^2$.

\begin{lemma}\label{Lem:dependency}
Suppose that $\{S_n\}_{n=1}^{\infty}$ is a sequence of random variables such that $S_n=\sum_{\alpha\in {\cal A}_n}X_{n\alpha}$, 
where for each $n$, $\{X_{n\alpha},\alpha\in {\cal A}_n\}$ is a family of random variables with dependency graph $L_n$.
Suppose further that there exist numbers $Q_{1n}$ and $Q_{2n}$ such that 
$\sum_{\alpha\in {\cal A}_n}\E[|X_{n\alpha}|]\leq Q_{1n}$ and, 
for every $\alpha_1,\alpha_2\in {\cal A}_n$,
\[
\sum_{\alpha\in\overline{N}_{L_n}(\alpha_1,\alpha_2)}\E[|X_{n\alpha}|\big|X_{n\alpha_1},X_{n\alpha_2}]\leq Q_{2n}.
\]
As $n\rightarrow \infty$, if $Q_{1n}Q_{2n}^2/(\mathrm{Var}(S_n))^{3/2}\rightarrow 0$, then
\[
\frac{S_n-\E[S_n]}{\sqrt{\mathrm{Var}(S_n)}} \xrightarrow{D} N(0,1).
\]
\end{lemma}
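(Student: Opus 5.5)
This lemma is quoted from \cite[Theorem 6.33]{janson2000}, so in the paper it can simply be cited. If I were to prove it, I would use Stein's method with local dependence, in the spirit of Baldi--Rinott and Barbour--Karo\'nski--Ruci\'nski.

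\emph{Reduction to centred summands.} Write $\sigma_n^2=\mathrm{Var}(S_n)$. I would first replace each $X_{n\alpha}$ by $Y_{n\alpha}=X_{n\alpha}-\E[X_{n\alpha}]$. The graph $L_n$ is still a dependency graph for the $Y_{n\alpha}$, and $\sum_\alpha \E|Y_{n\alpha}|\le 2Q_{1n}$. For the second hypothesis, the conditional sums of $|Y_{n\alpha}|$ over $\overline N_{L_n}(\alpha_1,\alpha_2)$ are bounded by $Q_{2n}+\sum_{\alpha\in\overline N}\E|X_{n\alpha}|\le 2Q_{2n}$. Here I use that the unconditional sum is the expectation of the conditional one. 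So we may assume $\E X_{n\alpha}=0$, at the cost of absolute constants.

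\emph{Setting up the Stein bound.} Let $W=S_n/\sigma_n$, and take a test function $h$ with bounded derivatives. Let $f$ be the solution of the Stein equation $f'(w)-wf(w)=h(w)-\E h(Z)$, so that $\|f\|,\|f'\|,\|f''\|$ are bounded by constants depending only on $h$. For each $\alpha$, let $N_\alpha=\overline N_{L_n}(\alpha)$ and $W_\alpha=\sigma_n^{-1}\sum_{\beta\notin N_\alpha}X_\beta$. Then $W_\alpha$ is independent of $X_\alpha$, so
\[
\E[Wf(W)]=\sigma_n^{-1}\sum_\alpha \E\big[X_\alpha\big(f(W)-f(W_\alpha)\big)\big].
\]
Taylor-expanding to second order around $W_\alpha$ gives a first-order term $\sigma_n^{-2}\sum_\alpha\sum_{\beta\in N_\alpha}\E[X_\alpha X_\beta f'(W_\alpha)]$ and a remainder.

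\emph{Controlling the remainder.} The remainder is bounded by $\|f''\|\sigma_n^{-3}\sum_\alpha\sum_{\beta,\gamma\in N_\alpha}\E|X_\alpha X_\beta X_\gamma|$. I would handle this by iterated conditioning. Conditioning on $(X_\alpha,X_\beta)$ bounds $\sum_{\gamma\in N_\alpha}\E[|X_\gamma|\mid X_\alpha,X_\beta]\le Q_{2n}$. Conditioning on $X_\alpha$ (take $\alpha_1=\alpha_2=\alpha$) gives $\sum_{\beta\in N_\alpha}\E[|X_\beta|\mid X_\alpha]\le Q_{2n}$. Finally summing over $\alpha$ gives the factor $Q_{1n}$, so the total is $O(Q_{1n}Q_{2n}^2/\sigma_n^3)$.

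\emph{Replacing $f'(W_\alpha)$ by $f'(W)$ (main obstacle).} I must show that $\sigma_n^{-2}\sum_\alpha\sum_{\beta\in N_\alpha}\E[X_\alpha X_\beta f'(W_\alpha)]$ is close to $\E f'(W)$, using $\sigma_n^{-2}\sum_\alpha\sum_{\beta\in N_\alpha}\E[X_\alpha X_\beta]=1$. Writing $f'(W_\alpha)$ as $f'(W)$ plus an increment, that increment costs another $\|f''\|\sigma_n^{-3}\sum\E|X_\alpha X_\beta X_\gamma|$ term, which is handled as above. What remains is a decoupling step: I must control $\mathrm{Cov}(X_\alpha X_\beta, f'(W))$. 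For this I would introduce the second-level variable $W_{\alpha\beta}=\sigma_n^{-1}\sum_{\gamma\notin\overline N(\alpha,\beta)}X_\gamma$, which is independent of $(X_\alpha,X_\beta)$. Then
\[
\E[X_\alpha X_\beta f'(W)]=\E[X_\alpha X_\beta]\,\E f'(W_{\alpha\beta})+\text{(error terms)},
\]
and the errors involve $|X_\alpha X_\beta|\,|W-W_{\alpha\beta}|$. These are bounded through the hypothesis on $\overline N(\alpha,\beta)$, again giving $O(Q_{1n}Q_{2n}^2/\sigma_n^3)$. The same neighbourhood argument also bounds $\E f'(W_{\alpha\beta})-\E f'(W)$ weighted by $|\E X_\alpha X_\beta|$. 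This is exactly why the hypothesis conditions on two variables rather than one.

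Collecting all terms gives $|\E h(W)-\E h(Z)|\le C_h\,Q_{1n}Q_{2n}^2/\sigma_n^3\to0$ for every smooth bounded $h$, which yields convergence in distribution.

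\emph{Alternative route.} I could instead use the cumulant method. The bound $\sigma_n^2\le CQ_{1n}Q_{2n}$ comes from the same conditioning argument. Together with the hypothesis it gives $Q_{2n}/\sigma_n\to0$, so the normalized cumulants $\kappa_k(S_n)/\sigma_n^k\le C_kQ_{1n}Q_{2n}^{k-1}/\sigma_n^k$ would vanish for $k\ge3$. However, that bound on mixed cumulants needs conditioning on more than two variables, so I would favour the Stein approach, which matches the hypotheses exactly.
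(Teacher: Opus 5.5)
Your proposal is correct and consistent with the paper, which gives no proof of this lemma and simply cites it as Theorem 6.33 of \cite{janson2000}. Your Stein's-method sketch matches the Barbour--Karo\'nski--Ruci\'nski-type argument behind that result: centring at the cost of replacing $Q_{1n},Q_{2n}$ by $2Q_{1n},2Q_{2n}$, using the neighbourhood-deleted sums $W_\alpha$ and $W_{\alpha\beta}$, and iterated conditioning (including the choice $\alpha_1=\alpha_2=\alpha$) to bound every error term by $O(Q_{1n}Q_{2n}^2/\sigma_n^3)$.
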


\begin{proof}[Proof of (i) and (ii) in Theorem \ref{Thm:Fn2}]
Recall from \eqref{sqFn} that $F_n^2$ can be expressed as a linear combination of edge and wedge indicators.
Let us denote by $\{T_{\alpha}\}_{\alpha\in {\cal B}_n}$ the set of all possible edges and wedges
in $G_{n,p}$, where ${\cal B}_n$ stands for an index set with the cardinality 
$|{\cal B}_n|=\binom{n}{2}+3\binom{n}{3}=\frac12n(n-1)^2$.
For each $\alpha\in {\cal B}_n$, define
\[
Y_{n\alpha}= \begin{cases}
    -4(m-1+m_{ij})\,I_{ij}, & \text{if $T_{\alpha}$ is the edge between vertices $i$ and $j$};\\[2mm]
    2\,I_{ij}I_{ik}, & \text{if $T_{\alpha}$ is the wedge centered at $i$ with neighbors $j\neq k$}.
\end{cases}
\]
Then, by \eqref{sqFn},
\begin{equation}\label{def:wtFn2}
  \widetilde{F}_n^2:= F_n^2-m(m+1)n=\sum_{\alpha\in {\cal B}_n}Y_{n\alpha}.
\end{equation}
Note that $\mathrm{Var}(\widetilde{F}_n^2)=\mathrm{Var}(F_n^2)$.
To prove the desired result \eqref{Fn2AN1} in Theorem \ref{Thm:Fn2}, it is now sufficient to show that
\begin{equation}\label{tildeFn2AN}
    \frac{\widetilde{F}_n^2-\E[\widetilde{F}_n^2]}{\sqrt{\mathrm{Var}(\widetilde{F}_n^2)}} \xrightarrow{D} N(0,1).
\end{equation}

We will apply Lemma \ref{Lem:dependency} to the collection $\{Y_{n\alpha}\}_{\alpha\in{\cal B}_n}$. 
First, construct a dependency graph $L_n$ with vertex set ${\cal B}_n$ by connecting two vertices $\alpha,\beta$ with an edge 
if and only if the corresponding subgraphs $T_{\alpha}$ and $T_{\beta}$ share at least one edge. 
(If they share no edge, the corresponding random variables are independent because they involve disjoint sets of edge indicators.)

Now we verify the conditions of Lemma \ref{Lem:dependency}. 
Without loss of generality, as in the proof of Proposition \ref{Prop:weak}, 
we may assume $np\to c$ for some constant $c\ge0$, 
so that $p\to0$ and $m=\lfloor np\rfloor$ satisfies $|m-1+m_{ij}|\le c+1$ for large $n$.

Since $|Y_{n\alpha}|\le 4(c+1)$ for an edge and $|Y_{n\alpha}|\le 2$ for a wedge,
\[
\sum_{\alpha\in{\cal B}_n}\E[|Y_{n\alpha}|] \le 4(c+1)\binom{n}{2}p + 2\cdot 3\binom{n}{3}p^2 \le 3(c+1)n^2p := Q_{1n},
\]
which is of order $n^2p$.

Fix any two vertices $\alpha_1,\alpha_2\in{\cal B}_n$ and 
let $\overline{N}_{L_n}(\alpha_1,\alpha_2)$ be their closed neighborhood in $L_n$. 
Note that the union $T_{\alpha_1}\cup T_{\alpha_2}$ contains at most six vertices. 
It follows that  the number of edges in this union is at most $\binom{6}{2}$,
and that the number of edges wedges that intersect this union is 
at most $3\binom{6}{3}+2\binom{6}{2}(n-6)$.   
These simple facts implies
\[
\sum_{\alpha\in \overline{N}_{L_n}(\alpha_1,\alpha_2)}\E[|Y_{n\alpha}|\given Y_{n\alpha_1},Y_{n\alpha_2}] 
\le \binom{6}{2}+3\binom{6}{3}+2\binom{6}{2}(n-6)p.
\]
Thus, we may take $Q_{2n}=O(1)$, since $np$ is bounded in Cases (i) and (ii). 

By \eqref{VarFn2asy} and \eqref{def:wtFn2}, we have $\mathrm{Var}(\widetilde{F}_n^2)=\mathrm{Var}(F_n^2)$ of order $n^2p$ 
in Case (i) and of order $n$ in Case (ii). Consequently,
\[
\frac{Q_{1n}Q_{2n}^2}{(\mathrm{Var}(\widetilde{F}_n^2))^{3/2}} = 
\begin{cases}
O\!\left(\dfrac{n^2p}{(n^2p)^{3/2}}\right)=O\!\left(\dfrac1{\sqrt{n^2p}}\right)\to0, & \text{in Case (i)},\\[4mm]
O\!\left(\dfrac{n^2p}{n^{3/2}}\right)=O\!\left(\dfrac1{\sqrt{n}}\right)\to0, & \text{in Case (ii)}.
\end{cases}
\]
Thus the condition of Lemma \ref{Lem:dependency} is satisfied, and \eqref{tildeFn2AN} holds for $np\to c$
for some constant $c\ge0$. 

If $np$ tends to 0 or a non-integer constant $c>0$,  
then (i) and (ii) of Theorem \ref{Thm:Fn2} follows by \eqref{Fn2AN1} and Slutsky's theorem.
\end{proof}

We next consider (iii), and begin with some basic analysis. 
Note that we now have $0<p\le \frac12$ and $np\to\infty$, which is the defining regime of (iii).

Recall that $E_n$ denotes the number of edges in $G_{n,p}$, and $W_n$ is twice the number of wedges.
By \eqref{sqFn} and \eqref{def:wtFn2}, we can rewrite $\widetilde{F}_n^2$ as 
\begin{equation}\label{wtFn22}
\widetilde{F}_n^2=-2\widetilde{U}_n-4(n-2)pE_n+W_n
\end{equation}
where 
\[
\widetilde{U}_n=\sum_{i=1}^n\sum_{j\neq i}(m-np-1+m_{ij}+2p)I_{ij}.
\]
Here we take $4(n-2)p$ as the coefficient of $E_n$ instead of $4np$,
since this choice will make the subsequent computations more concise.
Note that these coefficients satisfy 
\[|m-np-1+m_{ij}+2p|\le 2,\]
since $0< np-m<1, m_{ij}\in\{0,1\}$, and $0<p\le \frac12$. 

Now $\mathrm{Var}(\widetilde{F}_n^2)=\mathrm{Var}(F_n^2)$ by \eqref{def:wtFn2}, and from \eqref{VarFn2asy} we have, as $np\to\infty$,
\begin{equation}\label{VarHn}
\mathrm{Var}(\widetilde{F}_n^2) \sim 2n^3p^2(1-p)^2.    
\end{equation}
A direct computation gives
\begin{equation}\label{VartUn}
\mathrm{Var}(\widetilde{U}_n) \le  16\,\mathrm{Var}(E_n)=16\,\binom{n}{2}p(1-p)= O(n^2p)
=o\big(\mathrm{Var}(\widetilde{F}_n^2)\big),
\end{equation}
since $np\to\infty$. Consequently,
\[
\frac{2(\widetilde{U}_n-\E[\widetilde{U}_n])}{\sqrt{\mathrm{Var}(\widetilde{F}_n^2)}} \xrightarrow{P} 0
\]
by Chebyshev's inequality. 
Moreover, from \eqref{wtFn22} and Slutsky's theorem, to prove \eqref{tildeFn2AN} it suffices to show that
\begin{equation}\label{Vn4npEn}
\frac{(W_n-4(n-2)pE_n)-\E[W_n-4(n-2)pE_n]}{\sqrt{2n^3p^2(1-p)^2}} \xrightarrow{D} N(0,1).
\end{equation}

We now examine why the dependency graph method used for (i) and (ii) cannot be directly applied to establish \eqref{Vn4npEn}. 
Under the assumptions $p\le 1/2$ and $np\to\infty$, it is known that $E_n$ and $W_n$ satisfy a joint central limit theorem:
\[
\sqrt{\frac{2p}{1-p}}\begin{pmatrix}
\displaystyle\frac{E_n-\E[E_n]}{np} \\[6pt]
\displaystyle\frac{W_n-\E[W_n]}{4n^2p^2}
\end{pmatrix}
\xrightarrow{D} \begin{pmatrix} \mathcal{N} \\ \mathcal{N} \end{pmatrix},
\]
where $\mathcal{N}$ denotes a standard normal random variable 
(see \cite[Theorem 3(iii)]{feng2013} and \cite[Theorem 1(ii)]{feng2025limit}; 
note that both results are obtained via dependency graph arguments).
Since the limit in distribution is a degenerate normal, 
after straightforward calculations it follows that the linear combination $W_n-4(n-2)pE_n$ satisfies
\[
\frac{W_n-4(n-2)pE_n - \E[W_n-4(n-2)pE_n]}{\sqrt{n^4p^3}} \xrightarrow{P} 0,
\]
which indicates that the joint convergence is degenerate for this particular linear functional;
the asymptotic normality of $W_n-4(n-2)pE_n$ requires a finer analysis that captures the next‑order fluctuations.
Consequently, the dependency graph approach does not provide sufficient precision to prove \eqref{Vn4npEn},
and a more sophisticated method—such as the Berry–Esseen bounds for functionals of independent variables developed in \cite{shao2025104574}—is needed. This justifies our separate treatment of Theorem \ref{Thm:Fn2}(iii).

\subsection{Proof of Theorem \ref{Thm:Fn2}(iii)}

Given the limitations of the dependency graph approach, 
we now turn to a more refined normal approximation technique. 
Recently, Shao and Zhang \cite{shao2025104574} developed a powerful method for establishing Berry–Esseen bounds 
for functionals of independent random variables. 
Their approach is based on a discrete-difference version of Stein's method, 
inspired by the generalized perturbative framework of Chatterjee \cite{chatterjee2008new}. 
This method yields a bound on the Kolmogorov distance between the distribution of a standardized statistic 
and the standard normal distribution. To prepare for the proof of (iii), 
we first recall the key result from \cite{shao2025104574} that will be instrumental in our analysis.

Let $N\ge 2$ be an integer, $\bm{X}=(X_1,\dots,X_N)$ a vector of independent random variables
taking values in a measurable space $\mathcal{X}$, 
and $\bm{X}'=(X_1',\dots,X_N')$ an independent copy. 
For any subset $A \subset [N]$, 
define $\bm{X}^A$ by replacing $X_s$ with $X_s'$ for $s\in A$ and leaving others unchanged. For a measurable function $h:\mathcal{X}^N\to\mathbb{R}$, set
\begin{equation}\label{defdiffer}
\Delta_s h(\bm{X}^A)=h(\bm{X}^A)-h(\bm{X}^{A\cup\{s\}}),\quad s\in[N].
\end{equation}
In particular, $\Delta_s h(\bm{X})=\Delta_s h(\bm{X}^{\varnothing})$.

Consider the standardized random variable
\[
H = \frac{h(\bm{X}) - \E[h(\bm{X})]}{\sigma},
\]
where $\sigma^2={\rm Var}(h(\bm{X}))$. 
The following lemma, stated as Theorem 2.1 of \cite{shao2025104574}, 
provides a bound on the Kolmogorov distance between $H$ and the standard normal random variable $\mathcal{N}$.

\begin{lemma}\label{Shaotheorem}
Let $A_s = \{1, 2, \dots, s-1\}$ for $s \geq 2$ and $A_1 = \varnothing$. Define random variables
\[
V = \frac{1}{2\sigma^2} \sum_{s=1}^N \Delta_s h(\bm{X}) \Delta_s h(\bm{X}^{A_s}) \quad \text{and} \quad 
V^* = \frac{1}{\sigma^2} \sum_{s=1}^N \Delta_s h(\bm{X}) \big|\Delta_s h(\bm{X}^{A_s})\big|.
\]
Then the Kolmogorov distance between $H$ and $\mathcal{N}$ satisfies
\[
d_{\rm K}(H, \mathcal{N})\leq\E\left|1 -\E[V\given H]\right|+2\E \left| \E[V^* \given H]\right|.
\]
\end{lemma}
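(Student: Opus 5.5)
The plan is to run Stein's method for the Kolmogorov distance, combined with a Chatterjee-type interpolation identity along the fixed ordering $A_s=\{1,\dots,s-1\}$. Fix $z\in\mathbb{R}$ and let $f=f_z$ be the bounded solution of the Stein equation
\[
f'(w)-wf(w)=\mathbbm{1}\{w\le z\}-\Phi(z).
\]
We will use the classical bounds $0<f\le \sqrt{2\pi}/4$ and $|f'|\le 1$, and the fact that $w\mapsto wf(w)$ is nondecreasing. Then
\[
\mathbb{P}(H\le z)-\Phi(z)=\E[f'(H)-Hf(H)],
\]
so it suffices to bound this uniformly in $z$.

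\textbf{Step 1 (covariance identity).} Telescoping along the ordering gives
\[
h(\bm X)-h(\bm X')=\sum_{s=1}^N \Delta_s h(\bm X^{A_s}),
\]
because $\bm X^{A_N\cup\{N\}}=\bm X'$. Take any bounded $g$. Since $\bm X'$ is independent of $\bm X$, we have $\E[(h(\bm X)-\E h)\,g(\bm X)]=\E[(h(\bm X)-h(\bm X'))g(\bm X)]$, which equals $\sum_s \E[\Delta_s h(\bm X^{A_s})\,g(\bm X)]$.

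Now exchange $X_s$ and $X'_s$; this leaves the joint law invariant. Because $s\notin A_s$, the exchange sends $\bm X^{A_s}$ to $\bm X^{A_s\cup\{s\}}$ and vice versa, so $\Delta_s h(\bm X^{A_s})$ changes sign. It also sends $\bm X$ to $\bm X^{\{s\}}$. Averaging the two expressions yields
\[
\E[(h(\bm X)-\E h)\,g(\bm X)]=\tfrac12\sum_s \E\big[\Delta_s h(\bm X^{A_s})\,\Delta_s g(\bm X)\big].
\]
Applying this with $g=f(H)$ gives
\[
\E[Hf(H)]=\frac1{2\sigma}\sum_s\E\big[\Delta_s h(\bm X^{A_s})\,(f(H)-f(H^{(s)}))\big],
\]
where $H^{(s)}$ is computed from $\bm X^{\{s\}}$. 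Write $\delta_s=\Delta_s h(\bm X)/\sigma=H-H^{(s)}$.

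\textbf{Step 2 (Taylor expansion via the Stein equation).} Write
\[
f(H)-f(H^{(s)})=\int_0^{\delta_s} f'(H-t)\,dt=\delta_s f'(H)+\int_0^{\delta_s}\big(f'(H-t)-f'(H)\big)\,dt .
\]
The first term produces $\E[V f'(H)]$. Hence
\[
\E[f'(H)-Hf(H)]=\E\big[(1-\E[V\given H])f'(H)\big]-R,
\]
and since $|f'|\le1$ the first piece is bounded by $\E|1-\E[V\given H]|$.

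For the remainder $R$, substitute the Stein equation: $f'(H-t)-f'(H)=[(H-t)f(H-t)-Hf(H)]+[\mathbbm{1}\{H-t\le z\}-\mathbbm{1}\{H\le z\}]$. Monotonicity of $wf(w)$ shows that the integral over $t\in(0,\delta_s)$ of each bracket has a definite sign, which is opposite to that of $\delta_s$ (or zero). The same holds for the indicator bracket. Moreover, both $|wf(w)|$ and the indicators are bounded by $1$, so each integral is at most $|\delta_s|$ in absolute value.

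The consequence is that each summand of $R$ is $\Delta_s h(\bm X^{A_s})/\sigma$ times a quantity equal to $-\delta_s\,\theta_s$ with $\theta_s\in[0,2]$. Separating $\Delta_s h(\bm X^{A_s})$ into its sign and absolute value will turn this into a term controlled by $\sum_s \delta_s|\Delta_s h(\bm X^{A_s})|/\sigma=V^*$.

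\textbf{Step 3 (main obstacle).} The difficulty is that $\theta_s$ depends on $(H,\delta_s)$ and on $z$, not on $H$ alone. One therefore cannot simply condition on $H$ to obtain $\E|\E[V^*\given H]|$ rather than $\E|V^*|$. I would first try a monotone-coupling argument. Since $\theta_s$ is built from nondecreasing functions of $H$ on intervals of length $|\delta_s|$, one can rewrite the remainder as an integral over the level $u$ of terms of the form $\E[V^*\,\mathbbm{1}\{H\in I_u\}]$, with each such term bounded by $\E|\E[V^*\given H]|$. Failing that, I would reproduce the exact decomposition used in \cite{shao2025104574}, where the constant $2$ arises from the two bounded brackets. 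Taking the supremum over $z$ then gives the stated inequality.
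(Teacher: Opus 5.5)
The paper does not prove this lemma; it quotes it as Theorem 2.1 of \cite{shao2025104574}. Your argument therefore has to stand on its own, and it stops exactly where the real work begins.

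Your Steps 1 and 2 correctly give $\E[f'(H)-Hf(H)]=\E[(1-\E[V\given H])f'(H)]-R$. Here $R=\frac12\sum_s\E\big[\eta_s\int_0^{\delta_s}(f'(H-t)-f'(H))\,dt\big]$ with $\eta_s=\Delta_s h(\bm X^{A_s})/\sigma$, so that $V^*=\sum_s\delta_s|\eta_s|$. Your handling of $R$, however, fails.

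\begin{itemize}
\item \textbf{Sign error.} The integral in your Taylor formula is oriented. For nondecreasing $g$, $\int_0^{\delta_s}(g(H-t)-g(H))\,dt\le 0$ for either sign of $\delta_s$. So the $wf(w)$-bracket is always nonpositive and the indicator bracket always nonnegative, and the representation $-\delta_s\theta_s$ with $\theta_s\in[0,2]$ is false.
\item \textbf{Loss of $V^*$.} Once each bracket is reduced to $|\delta_s|$ times a bounded factor, nothing can bring back $V^*$. It is a signed sum with $\E V^*=0$, while the factors $\operatorname{sgn}(\eta_s)\theta_s$ vary with $s$ and $z$. The best a bound of that form can give is $\E\sum_s|\delta_s\eta_s|\ge 2\E V=2$, which is useless.
\end{itemize}

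Step 3 names this obstacle but offers only a hoped-for monotone-coupling argument or a deferral to the source. The decisive step is therefore missing.

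The missing idea is a sharper pointwise bound combined with the same coordinate exchange you used in Step 1, rather than any attempt to condition $\theta_s$ on $H$. For nondecreasing $g$ one has $0\le\int_0^{\delta_s}(g(H)-g(H-t))\,dt\le\delta_s\,(g(H)-g(H^{(s)}))$, hence
\[
\Big|\eta_s\int_0^{\delta_s}\big(g(H-t)-g(H)\big)\,dt\Big|\le|\eta_s|\,\delta_s\big(g(H)-g(H^{(s)})\big).
\]
Exchanging $X_s$ and $X_s'$ preserves the law, fixes $|\eta_s|$, flips $\delta_s$, and swaps $H$ with $H^{(s)}$. Hence $\E[|\eta_s|\delta_s g(H^{(s)})]=-\E[|\eta_s|\delta_s g(H)]$, and
\[
\frac12\sum_{s=1}^N\E\Big[|\eta_s|\,\delta_s\big(g(H)-g(H^{(s)})\big)\Big]=\E\big[V^*g(H)\big]=\E\big[\E[V^*\given H]\,g(H)\big]\le\sup|g|\cdot\E\big|\E[V^*\given H]\big|.
\]
Apply this to $g(w)=wf(w)$ and to $g(w)=-\mathbbm{1}\{w\le z\}$, both nondecreasing with $\sup|g|\le 1$. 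This gives $|R|\le 2\E|\E[V^*\given H]|$. Because $\eta_s$ has no fixed sign, the bounds for the two brackets simply add, which is where the constant $2$ comes from. Combined with $|f'|\le 1$, this proves the lemma uniformly in $z$.
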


To apply Lemma \ref{Shaotheorem}, we encode the random graph model as a random vector of edge indicators. 
Let $N=\binom{n}{2}$ be the total number of possible edges in $G_{n,p}$. 
We fix an ordering of the edges by sorting pairs $(i,j)$ lexicographically: 
first by increasing $i$, then by increasing $j$. 
More precisely, for $s\in[N]$ define
\begin{equation*}
i(s)=\min\left\{1\le t\le n-1:s\le \frac{(2n-t-1)t}{2}\right\},\quad
j(s)=s+i(s)-\frac{[2n-i(s)][i(s)-1]}{2},
\end{equation*}
which gives a bijection between $[N]$ and the set of all possible edges in $G_{n,p}$.
Consequently, the random vector $\bm{Z}=(Z_1,\dots,Z_N)$ with $Z_s=I_{i(s)j(s)}$ represents the edge indicators of $G_{n,p}$.

Both $E_n$ and $W_n$ can be viewed as functions of $\bm{Z}$; 
we will occasionally write $E_n(\bm{Z})$ and $W_n(\bm{Z})$ to emphasize this dependence. 
Define 
\begin{equation}\label{defh}
    h(\bm{Z})=\frac12W_n(\bm{Z})-2(n-2)pE_n(\bm{Z})=\frac12W_n-2(n-2)pE_n,
\end{equation}
i.e., the number of wedges minus $2(n-2)p$ times the number of edges.
Note that $2h(\bm{Z})=W_n-4(n-2)pE_n$, 
so proving asymptotic normality of $h(\bm{Z})$ is equivalent to establishing \eqref{Vn4npEn}.
Let $\sigma_n^2 = \operatorname{Var}[h(\bm{Z})]$. 
From \eqref{wtFn22}-\eqref{VartUn}, we obtain
\begin{equation}\label{sigman2}
\sigma_n^2 = \frac12 n^3 p^2 (1-p)^2 (1+o(1)).
\end{equation}
We then introduce the standardized statistic
\begin{equation*}
H_n = \frac{h(\bm{Z}) - \E[h(\bm{Z})]}{\sigma_n}.
\end{equation*}
Our goal is to prove that under the condition that $p\le \frac12$ and $np\to \infty$,
\begin{equation}\label{HnAN}
H_n \xrightarrow{D} N(0,1).
\end{equation}

To define the discrete difference operator required in Lemma \ref{Shaotheorem}, 
we need an independent copy of $\bm{Z}$. 
Let $\{I_{ij}':1\le i<j\le n\}$ be an independent copy of $\{I_{ij}:1\le i<j\le n\}$, 
and set $\bm{Z}'=(Z_1',\dots,Z_N')$ with $Z_s' = I_{i(s)j(s)}'$. 
Then $\bm{Z}'$ is an independent copy of $\bm{Z}$. 
For any subset $A\subset[N]$, 
denote by $\bm{Z}^A$ the vector obtained from $\bm{Z}$ by replacing $Z_s$ with $Z_s'$ for $s\in A$ 
and leaving the other coordinates unchanged. 
Following \cite{shao2025104574}, we define the discrete difference
\[
\Delta_s h(\bm{Z}^A) = h(\bm{Z}^A) - h(\bm{Z}^{A\cup\{s\}}),\qquad s\in[N].
\]
In particular, $\Delta_s h(\bm{Z}) = h(\bm{Z}) - h(\bm{Z}^{\{s\}})$. 
Now set $A_s = \{1,\dots,s-1\}$ for $s\ge2$ and $A_1=\varnothing$. 
Then, in accordance with Lemma \ref{Shaotheorem}, we define
\begin{equation}\label{expreVn}
V_n = \frac{1}{2\sigma_n^2}\sum_{s=1}^N \Delta_s h(\bm{Z})\,\Delta_s h(\bm{Z}^{A_s}),
\end{equation}
and
\begin{equation}\label{expreVn*}
V_n^* = \frac{1}{\sigma_n^2}\sum_{s=1}^N \Delta_s h(\bm{Z})\,\bigl|\Delta_s h(\bm{Z}^{A_s})\bigr|.
\end{equation}
Lemma \ref{Shaotheorem} then yields the following bound on the Kolmogorov distance between $H_n$ and a standard normal variable $\mathcal{N}$:
\begin{equation}\label{DKVnVnstar}
d_{\rm K}(H_n,\mathcal{N}) \le \E\bigl|1-\E[V_n\given H_n]\bigr| + 2\E\bigl|\E[V_n^*\given H_n]\bigr|.
\end{equation}
In the next step, we will estimate the right‑hand side and show that it tends to zero as $n\to\infty$, thereby establishing \eqref{HnAN}.

We now do further analysis on $V_n$ and $V_n^*$, especially using their explicit expressions.
For any fixed vertex pair $1\le i<j\le n$, we define a set of edge variables $\{I_{k\ell}^{(i,j)}:1\le k\ne \ell\le n\}$,
where
\begin{equation}\label{Iklij}
I_{k\ell}^{(i,j)}= \begin{cases} I_{k\ell}', & \text{if } \min\{k,\ell\}<i \text{ or } \min\{k,\ell\}=i<\max\{k,\ell\}<j, \\
                 I_{k\ell}, & \text{otherwise}. \end{cases}
\end{equation}
In other words, $I_{k\ell}^{(i,j)}$ equals the independent copy if the edge $(k,\ell)$ 
appears before $(i,j)$ in the ordering, and equals the original indicator otherwise. 
This precisely captures the effect of the replacement set $A_s$.

\begin{lemma}\label{lemma3}
For any fixed $s \in [N]$ with the corresponding vertex pair $(i,j) = (i(s),j(s))$, we have
\begin{equation}\label{deltasz}
  \Delta_s h(\bm{Z}) = \sum_{k \ne i,j} M_{ijk},   
\end{equation}
and for $A_s = \{1,2,\dots,s-1\}$ with $A_1 = \varnothing $,
\begin{equation}\label{deltaszs}
\Delta_s h(\bm{Z}^{A_s}) = \sum_{\ell\ne i,j} \widetilde{M}_{ij\ell}, 
\end{equation}
where
\begin{align}\label{MMbar}
M_{ijk} = \big(I_{ij} - I'_{ij}\big) (I_{ik} +I_{jk}-2p), \quad 
\widetilde{M}_{ij\ell} = \big(I_{ij} - I'_{ij}\big) \big( I_{i\ell}^{(i,j)} +  I_{j\ell}^{(i,j)}-2p \big). 
\end{align}
\end{lemma}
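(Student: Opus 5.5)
Since $h$ is a polynomial of degree at most two in the edge indicators, and $\bm{Z}$ and $\bm{Z}^{\{s\}}$ differ only in the single coordinate $s$ (the pair $(i,j)$), I would compute $\Delta_s h$ directly by isolating every term of $h$ that involves $I_{ij}$.

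\emph{The edge count.} In $E_n$ the indicator $I_{ij}$ appears exactly once. Hence
\[
E_n(\bm{Z})-E_n(\bm{Z}^{\{s\}})=I_{ij}-I'_{ij}.
\]

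\emph{The wedge count.} $\frac12W_n$ is the number of wedges, and every wedge containing the edge $ij$ has the form $\{ij,ik\}$ (centered at $i$) or $\{ij,jk\}$ (centered at $j$), with $k\ne i,j$. Each such wedge appears exactly once, so the wedges containing $ij$ contribute
\[
I_{ij}\sum_{k\ne i,j}(I_{ik}+I_{jk}).
\]
All other wedges do not involve coordinate $s$ and cancel in the difference. Therefore
\[
\tfrac12 W_n(\bm{Z})-\tfrac12W_n(\bm{Z}^{\{s\}})=(I_{ij}-I'_{ij})\sum_{k\ne i,j}(I_{ik}+I_{jk}).
\]

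\emph{Combining.} Using the definition \eqref{defh}, and noting that the sum over $k\ne i,j$ has exactly $n-2$ terms,
\[
2(n-2)p\,(I_{ij}-I'_{ij})=\sum_{k\ne i,j}(I_{ij}-I'_{ij})\,2p .
\]
Substituting both differences into $h=\frac12W_n-2(n-2)pE_n$ gives \eqref{deltasz}:
\[
\Delta_s h(\bm{Z})=\sum_{k\ne i,j}(I_{ij}-I'_{ij})(I_{ik}+I_{jk}-2p)=\sum_{k\ne i,j}M_{ijk}.
\]
This is the reason for the paper's choice of the coefficient $4(n-2)p$ rather than $4np$.

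\emph{The case $A_s$.} The identical computation applies with the base vector $\bm{Z}^{A_s}$ in place of $\bm{Z}$. Since $s\notin A_s$, coordinate $s$ in $\bm{Z}^{A_s}$ is still $I_{ij}$, and in $\bm{Z}^{A_s\cup\{s\}}$ it becomes $I'_{ij}$. The other edges $i\ell$ and $j\ell$ take the values $I'$ if their index lies in $A_s$, that is, if they precede $(i,j)$ in the lexicographic order, and $I$ otherwise.

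The only point needing care is to check that this ordering matches \eqref{Iklij}. An edge $\{k,\ell\}$ precedes $(i,j)$ lexicographically exactly when $\min\{k,\ell\}<i$, or $\min\{k,\ell\}=i$ and $\max\{k,\ell\}<j$. With this identification the values of the neighbouring edges are $I^{(i,j)}_{i\ell}$ and $I^{(i,j)}_{j\ell}$, and we obtain \eqref{deltaszs}:
\[
\Delta_s h(\bm{Z}^{A_s})=\sum_{\ell\ne i,j}(I_{ij}-I'_{ij})\bigl(I^{(i,j)}_{i\ell}+I^{(i,j)}_{j\ell}-2p\bigr)=\sum_{\ell\ne i,j}\widetilde{M}_{ij\ell}.
\]

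I expect no genuine obstacle. The main things to verify are the bookkeeping of which wedges contain $ij$, with each counted once in $\frac12W_n$, and the correspondence between $A_s$ and the rule \eqref{Iklij}.
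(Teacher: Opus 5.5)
Your proposal is correct and follows essentially the same route as the paper. Both write $\Delta_s h(\bm{Z}^A)=\frac12\Delta_s W_n(\bm{Z}^A)-2(n-2)p\,\Delta_s E_n(\bm{Z}^A)$, isolate the terms containing $I_{ij}$, absorb $2(n-2)p$ as $\sum_{k\ne i,j}2p$, and treat $A_s$ by matching the lexicographic predecessors of $(i,j)$ to the rule \eqref{Iklij}; your explicit check of that ordering correspondence is a detail the paper leaves implicit.
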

\begin{proof}
For any subset $A\subset [N]$, by \eqref{defdiffer} and \eqref{defh} we have 
\begin{align}\label{deltaih}
\Delta_sh(\bm{Z}^A)
    &=\frac12 W_n(\bm{Z}^A)-2(n-2)pE_n(\bm{Z}^A)-\frac12 W_n(\bm{Z}^{A\cup \{s\}})+2(n-2)pE_n(\bm{Z}^{A\cup \{s\}})\nonumber\\
    &=\frac12\Delta_s W_n(\bm{Z}^A)-2(n-2)p\Delta_s E_n(\bm{Z}^A).
\end{align}

If $A=\varnothing$, it follows by definition that
\begin{align*}
 \Delta_s W_n(\bm{Z}) = 2\sum_{k\neq i,j}(I_{ij}-I_{ij}')(I_{ik}+I_{jk}),   
\end{align*}
and
\[
\Delta_s E_n(\bm{Z}) = I_{ij}-I_{ij}'.
\]
Thus, combining \eqref{deltaih} yields 
\begin{align*}
\Delta_sh(\bm{Z})&=\sum_{k \ne i,j}\big(I_{ij}-I_{ij}'\big)(I_{ik}+I_{jk})-2(n-2)p\big(I_{ij}-I_{ij}'\big)\\
                 &=\sum_{k \ne i,j}\big(I_{ij}-I_{ij}'\big)(I_{ik}+I_{jk}-2p),
\end{align*}
which proves \eqref{deltasz}.

For $A_s=\{1,\dots,s-1\}$, also by definition we have
\begin{align*}
        \Delta_sW_n(\bm{Z}^{A_s})&=2\sum_{\ell \ne i,j}\big(I_{ij}-I_{ij}'\big)(I_{i\ell}^{(i,j)}+I_{j\ell}^{(i,j)}),
\end{align*}
and
\begin{align*}
\Delta_sE_n(\bm{Z}^{A_s})&=I_{ij}-I_{ij}'.
\end{align*}
By \eqref{deltaih}, similarly we have that for any $s\in [N]$,
\begin{align*}
\Delta_sh(\bm{Z}^{A_s})&=\sum_{\ell\ne i,j}\big(I_{ij}-I_{ij}'\big)\big(I_{i\ell}^{(i,j)}+I_{j\ell}^{(i,j)}\big)
         -2(n-2)p\big(I_{ij}-I_{ij}'\big)\\
    &=\sum_{\ell \ne i,j}\big(I_{ij}-I_{ij}'\big)\big(I_{i\ell}^{(i,j)}+I_{j\ell}^{(i,j)}-2p\big),
\end{align*}
and thus \eqref{deltaszs} holds. The proof of Lemma \ref{lemma3} is complete.
\end{proof}

By \eqref{expreVn} and \eqref{expreVn*}, 
Lemma \ref{lemma3} provides the explicit expressions for $V_n$ and $V_n^*$ as follows:
\begin{align}
V_n&=\frac{1}{2\sigma_n^2} \sum_{1\le i<j\le n} \Biggl( \sum_{k\neq i,j} M_{ijk} \Biggr)
\Biggl( \sum_{\ell\neq i,j} \widetilde{M}_{ij\ell} \Biggr),\label{VnEF}\\
V_n^* &= \frac{1}{\sigma_n^2} \sum_{1\le i<j\le n} \sum_{k\neq i,j} M_{ijk}
\Biggl| \sum_{\ell\neq i,j} \widetilde{M}_{ij\ell} \Biggr|.
\label{VnstarEF}
\end{align}

We now discuss the first two moments of $V_n$ and $V_n^*$, and begin by showing
\begin{equation}\label{EVnVnstar}
    \E[V_n]=1,\quad \E[V_n^*]=0.
\end{equation}

Observe that $h(\bm{Z})$ and $h(\bm{Z}')$ are independent and identically distributed, and by telescoping,
\begin{equation}\label{hZhZpdiff}
h(\bm{Z}) - h(\bm{Z}') = h(\bm{Z}^{A_1}) - h(\bm{Z}^{A_{N+1}})=\sum_{s=1}^N \Delta_s h(\bm{Z}^{A_s}),    
\end{equation}
where $A_1=\varnothing$ and $A_{N+1}=[N]$.

Consider the effect of swapping the $s$-th coordinate between $\bm{Z}$ and its copy $\bm{Z}'$. 
This exchange leaves the joint distribution unchanged and transforms 
$\Delta_s h(\bm{Z}^{A_s})$ into $-\Delta_s h(\bm{Z}^{A_s})$, 
while $h(\bm{Z})$ becomes $h(\bm{Z}^{\{s\}})$. 
Hence we have the distributional identity
\begin{equation*}
h(\bm{Z}) \Delta_s h(\bm{Z}^{A_s})\overset{d}{=} -h(\bm{Z}^{\{s\}})\Delta_s h(\bm{Z}^{A_s}),
\end{equation*}
and consequently,
\begin{equation} \label{EhZ}
    \E\big[ h(\bm{Z}) \Delta_s h(\bm{Z}^{A_s})\big]=-\E\big[h(\bm{Z}^{\{s\}})\Delta_s h(\bm{Z}^{A_s})\big].
\end{equation}

Now compute the variance of $h(\bm{Z})$ using the decomposition \eqref{hZhZpdiff}:
\[
\sigma_n^2 = \E\big[h(\bm{Z})(h(\bm{Z})-h(\bm{Z}'))\big] = \sum_{s=1}^N \E\big[h(\bm{Z}) \Delta_s h(\bm{Z}^{A_s})\big].
\]
Applying \eqref{EhZ} and noting that $\Delta_s h(\bm{Z}) = h(\bm{Z}) - h(\bm{Z}^{\{s\}})$, we obtain
\[
2\E\big[h(\bm{Z}) \Delta_s h(\bm{Z}^{A_s})\big] = \E\big[\Delta_s h(\bm{Z}) \Delta_s h(\bm{Z}^{A_s})\big].
\]
Summing over $s$ yields
\[
\sigma_n^2 = \frac12 \sum_{s=1}^N \E\big[\Delta_s h(\bm{Z}) \Delta_s h(\bm{Z}^{A_s})\big].
\]
Taking expectations in the definition \eqref{expreVn} of $V_n$ gives $\E[V_n]=1$.

Next, we show $\E[V_n^*]=0$. 
Recall the explicit expressions \eqref{VnstarEF} and \eqref{MMbar}.
Notice that the factor $(I_{ij}-I_{ij}')$ is common to both $M_{ijk}$ and $\widetilde{M}_{ij\ell}$, 
and it is independent of all other random variables. 
Moreover,
\[
M_{ijk} \bigg|\sum_{\ell\ne i,j} \widetilde{M}_{ij\ell}\bigg| = (I_{ij}-I_{ij}')\big(I_{ik}+I_{jk}-2p\big)\cdot |I_{ij}-I_{ij}'| 
\cdot \bigg|\sum_{\ell\ne i,j} \bigl(I_{i\ell}^{(i,j)}+I_{j\ell}^{(i,j)}-2p\bigr)\bigg|.
\]
Using $(I_{ij}-I_{ij}')|I_{ij}-I_{ij}'| = I_{ij}-I_{ij}'$, this simplifies to
\[
M_{ijk}\bigg|\sum_{\ell\ne i,j} \widetilde{M}_{ij\ell}\bigg| = (I_{ij}-I_{ij}') \big(I_{ik}+I_{jk}-2p\big)
\bigg|\sum_{\ell\ne i,j} \bigl(I_{i\ell}^{(i,j)}+I_{j\ell}^{(i,j)}-2p\bigr)\bigg|.
\]
Since $(I_{ij}-I_{ij}')$ is independent of the remaining factors
and $\E[I_{ij}-I_{ij}'] = 0$, 
taking expectation gives
\[
\E\bigg[ M_{ijk}\bigg|\sum_{\ell\ne i,j}\widetilde{M}_{ij\ell}\bigg|\bigg]=0.
\]
Summing over all $i,j,k$ yields $\E[V_n^*]=0$, completing the proof of \eqref{EVnVnstar}.

For the variances of $V_n$ and $V_n^*$, we have the following result.

\begin{lemma}\label{upperbound}
If $p\le \frac12$ and $np\to\infty$, we have
\[
\max\big\{\mathrm{Var}(V_n), \mathrm{Var}(V^*_n)\big\}=O\Big(\frac1n\Big).
\]
\end{lemma}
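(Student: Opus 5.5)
Since $\sigma_n^2=\frac12 n^3p^2(1-p)^2(1+o(1))$ by \eqref{sigman2}, proving Lemma \ref{upperbound} amounts to showing
\[
\mathrm{Var}(\sigma_n^2V_n)=O(n^5p^4)\quad\text{and}\quad \mathrm{Var}(\sigma_n^2V_n^*)=O(n^5p^4),
\]
using $1-p\ge\frac12$. Both are sums over pairs $(i,j)$ of summands $S_{ij}$ and $S^*_{ij}$ that are products of the factor $\delta_{ij}:=I_{ij}-I'_{ij}$ with functions of the centered variables $\xi_{ik}=I_{ik}-p$, $\xi_{ik}^{(i,j)}=I^{(i,j)}_{ik}-p$ (and similarly at $j$). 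We would write $\mathrm{Var}(\sum_{i<j}S_{ij})=\sum_{(i,j),(i',j')}\mathrm{Cov}(S_{ij},S_{i'j'})$ and classify pairs of vertex pairs by their overlap: equal, sharing one vertex, or disjoint.

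\textbf{Diagonal and one-shared-vertex terms.} We first record the basic moment estimates. Let $B_{ij}=\sum_{k\ne i,j}(I_{ik}+I_{jk}-2p)$ and $\widetilde B_{ij}=\sum_\ell (I^{(i,j)}_{i\ell}+I^{(i,j)}_{j\ell}-2p)$. Each is a sum of $2(n-2)$ centered, independent Bernoulli terms, because the primed/unprimed choice never duplicates an edge within one sum. Hence $\E[B_{ij}^4],\E[\widetilde B_{ij}^4]=O((np)^2)$, and $\E[\delta_{ij}^2]=2p(1-p)$. Then
\[
S_{ij}=\delta_{ij}^2B_{ij}\widetilde B_{ij}\quad\text{gives}\quad \E[S_{ij}^2]\le 2p\,\E[B^2\widetilde B^2]=O(p\cdot n^2p^2),
\]
so the $n^2$ diagonal terms contribute $O(n^4p^3)=o(n^5p^4)$. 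For the $O(n^3)$ pairs sharing one vertex, we bound the covariance by Cauchy--Schwarz. Each such covariance is $O(n^2p^3)$, and also $O(p^2\cdot np)$ after exploiting the independence of $\delta_{ij}$ and $\delta_{ij'}$, since only the shared-edge terms inside $B$ correlate. This yields $O(n^3\cdot n^2p^3\cdot p)$. Here we must be careful: the naive bound gives $n^5p^3$, which is too big, so we will expand $B_{ij}\widetilde B_{ij}=\sum_{k,\ell}\xi\xi$ and compute the covariance exactly.

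\textbf{Main obstacle: the $O(n^4)$ disjoint pairs.} For the disjoint pairs $\{i,j\}\cap\{i',j'\}=\varnothing$, the factors $\delta$ are independent and appear squared. We expand
\[
S_{ij}=\delta_{ij}^2\sum_{k,\ell}\eta_{ik}\tilde\eta_{i\ell},
\]
with $\eta_{ik}$ ranging over $\xi_{ik},\xi_{jk}$ and $\tilde\eta$ over the mixed copies, and similarly for $S_{i'j'}$. A covariance of products of centered independent edge variables is nonzero only if the edge multiset of one product meets that of the other and every edge in the union appears at least twice. Both $\eta_{ik}$ and $\tilde\eta_{i\ell}$ are incident to $\{i,j\}$, while those of the second product are incident to $\{i',j'\}$, so the only shareable edges are those between the two pairs, e.g. $\{i,i'\}$. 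The surviving configurations force $k,\ell\in\{i',j'\}$ (and symmetrically), i.e.\ $O(1)$ index choices. Each contributes $O(p^2)$ from the $\delta$'s times $O(p^2)$ from the squared edges, with no free index. Summing over $n^4$ pairs gives $O(n^4p^4)$. The dangerous configuration is the diagonal $k=\ell$ term $\eta_{ik}\tilde\eta_{ik}$, which has nonzero mean and correlates across pairs; here we would use that the covariance of the variance-type terms $\xi_{ik}^2$ with $\xi_{i'k}^2$ vanishes unless an edge is literally shared, which again forces $k\in\{i',j'\}$. We also need to check that the primed/unprimed mixing, where $I^{(i,j)}$ may be $I'$ while the other pair uses $I$, only reduces the correlations, since $I$ and $I'$ are independent. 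Dealing with these sign and copy cases carefully is where the work lies.

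\textbf{The term $V_n^*$.} For $V_n^*$ we would use the factorization already shown,
\[
\sigma_n^2V_n^*=\sum_{i<j}\delta_{ij}B_{ij}|\widetilde B_{ij}|.
\]
The diagonal and one-shared-vertex terms are handled exactly as above. For disjoint pairs, conditioning on everything except $\delta_{ij}$ and $\delta_{i'j'}$ (which are independent of each other and of all else, with mean zero) kills the covariance entirely. Likewise, for pairs sharing one vertex the $\delta$'s are distinct and independent, so those covariances also vanish. Hence $\mathrm{Var}(\sigma_n^2V_n^*)=\sum_{i<j}\E[\delta^2B^2\widetilde B^2]=O(n^4p^3)$, which is even better than needed. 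Dividing by $\sigma_n^4\asymp n^6p^4$ gives $O(1/(n^2p))=O(1/n)$ for both quantities.
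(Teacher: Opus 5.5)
Your split by overlap (identical pairs, one shared vertex, disjoint pairs) is the paper's Case 1--3 decomposition, and your diagonal and disjoint-pair estimates are sound. For $V_n^*$, you observe that $\delta_{ij}$ enters only to the first power ($\delta_{ij}|\delta_{ij}|=\delta_{ij}$), so disjoint-pair covariances vanish exactly. This is correct, and sharper than the paper's sketch, which writes $J_{ij}^2$ there and moves the absolute value inside the sum over $\ell$. (Minor: for $V_n$, disjoint patterns in which all four factors are the same edge variable contribute $\mathrm{Var}(\xi_e^2)\asymp p$. The per-pair bound is therefore $O(p^3)$, not $O(p^4)$, but the total $O(n^4p^3)$ is still harmless.)

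The gap is in the one-shared-vertex case, which is exactly the case that produces the $1/n$ rate. For $V_n^*$ you assert these covariances vanish because $\delta_{ij}$ and $\delta_{ij'}$ are independent. But $\delta_{ij}$ is not independent of the \emph{other} summand: $B_{ij'}$ contains $I_{ij}$ (the term $k=j$), $\widetilde B_{ij'}$ contains $I_{ij}$ or $I'_{ij}$, and symmetrically $B_{ij},\widetilde B_{ij}$ involve the edge $\{i,j'\}$. Already without absolute values, $\E[\delta_{ij}\delta_{ij'}B_{ij}B_{ij'}]=\E[\delta_{ij}(I_{ij}-p)]\,\E[\delta_{ij'}(I_{ij'}-p)]=p^2(1-p)^2\neq0$. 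Three consequences follow. Your formula $\mathrm{Var}(\sigma_n^2V_n^*)=\sum_{i<j}\E[\delta_{ij}^2B_{ij}^2\widetilde B_{ij}^2]$ is false. ``Handled exactly as above'' is not available, since $|\widetilde B_{ij}|$ cannot be expanded into monomials. And Cauchy--Schwarz gives only $O(n^2p^3)$ per pair, i.e.\ $\mathrm{Var}(V_n^*)=O(1/(np))$, not $O(1/n)$.

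A repair is to condition on all edge variables other than those of $\{i,j\}$ and $\{i,j'\}$. The conditional expectation of the product then factorizes as $\E[\delta_{ij'}B_{ij}|\widetilde B_{ij}|\mid\cdot\,]\cdot\E[\delta_{ij}B_{ij'}|\widetilde B_{ij'}|\mid\cdot\,]$. Because $\delta$ is centered, each factor is at most $2p$ times a one-edge increment. By $\bigl||x+a|-|x|\bigr|\le|a|$, that increment is bounded by $2+|B^-|+|\widetilde B^-|$, where $B^-,\widetilde B^-$ omit that edge. This gives $O(np^3)$ per pair and $O(n^4p^3)$ overall, so your final bound for $V_n^*$ does hold once this step is supplied.

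For $V_n$ you announce, but do not perform, the exact computation for shared-vertex pairs. The estimate you do state, $O(p^2\cdot np)$, is too small by a factor $np$. $B_{ij}$ and $B_{ij'}$ share the edges $\{i,k\}$, $k\notin\{i,j,j'\}$, and for most triples $\widetilde B_{ij},\widetilde B_{ij'}$ share order $n$ edges in the same copy. Hence the cross pairing $\E[B_{ij}B_{ij'}]\,\E[\widetilde B_{ij}\widetilde B_{ij'}]$ makes these covariances of order $n^2p^4$. Summed over $n^3$ triples, this is precisely the $n^5p^4$ that yields $1/n$. In addition, $\delta_{ij}^2$ is correlated with $I_{ij}$ inside $B_{ij'}$ (covariance $p(1-p)(1-2p)$), so the $\delta^2$ factors do not split off as in the disjoint case.

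The paper does this step by enumerating index patterns: two free indices give $O(n^2)\cdot O(p^4)$, and all other patterns are lower order. Equivalently, the conditioning above reduces the covariance to $4p^2(1-p)^2\,\mathrm{Cov}(B^-_{ij}\widetilde B^-_{ij},B^-_{ij'}\widetilde B^-_{ij'})=O(n^2p^4)$ plus terms that are $o(n^2p^4)$ when $np\to\infty$. Without one of these arguments, your proposal proves only $\mathrm{Var}(V_n)=O(1/(np))$. That suffices for the central limit theorem but not for the lemma as stated.
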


\begin{proof}
Let us perform some preliminary computations that will be helpful later. 
For any $1\le i<j\le n$ and $k\ne i,j$, let
\[
J_{ij}=I_{ij} - I'_{ij}, \quad 
J_{ijk}^{(1)}=I_{ik}+I_{jk}-2p, \quad
J_{ijk}^{(2)}=I_{ik}^{(i,j)}+I_{jk}^{(i,j)}-2p.
\]
It is clear that $J_{ij}$ is independent of $J_{ijk}^{(1)}$ and $J_{ijk}^{(2)}$,
\[\E[J_{ij}]=\E\big[J_{ijk}^{(1)}\big]=\E\big[J_{ijk}^{(2)}\big]=0, \quad \E[J_{ij}^2]=2p(1-p),\]
and by \eqref{MMbar},
\[
M_{ijk}=J_{ij}J_{ijk}^{(1)}, \quad   \widetilde{M}_{ijk} =J_{ij}J_{ijk}^{(2)}.
\]
Moreover, for any $k,\ell\notin \{i,j\}$ we have
\begin{equation*}
 \E\big[J_{ijk}^{(1)}J_{ij\ell}^{(2)}\big]
 = {\rm Cov}\big(I_{ik}+I_{jk}, I_{i\ell}^{(i,j)}+I_{j\ell}^{(i,j)}\bigr),  
\end{equation*}
which relies on the relative sizes of the four integers $i,j,k$ and $\ell$. 
By \eqref{Iklij}, if $k\ne\ell$ or $k=\ell<i$, 
the indicators $I_{ik}$ and $I_{i\ell}^{(i,j)}$ are independent, 
as are $I_{jk}$ and $I_{j\ell}^{(i,j)}$.  
Similarly, if $i<k=\ell<j$, it follows that $I_{i\ell}^{(i,j)}=I_{ik}'$  and $I_{j\ell}^{(i,j)}=I_{jk}$;
if $j<k=\ell$, we have $I_{i\ell}^{(i,j)}=I_{ik}$  and $I_{j\ell}^{(i,j)}=I_{jk}$.
Combining these facts, we thus have
\begin{equation}\label{EJ12kl}
 \E\big[J_{ijk}^{(1)}J_{ij\ell}^{(2)}\big]
 = \begin{cases}
 0,       & \text{ if }  k\ne \ell \text{ or } k=\ell<i;\\
 p(1-p),  & \text{ if }  i<k=\ell<j;\\
 2p(1-p),  & \text{ if }  j<k=\ell.  
 \end{cases}
\end{equation}

We first consider the variance of \(V_n\). 
It follows by \eqref{VnEF} that
\begin{align}\label{VarVn}
\mathrm{Var}(V_n) &= \frac{1}{4\sigma_n^4} 
\sum_{i_1<j_1} \sum_{i_2<j_2} 
\mathrm{Cov}\big(\varLambda_{i_1j_1},\varLambda_{i_2j_2}\big), 
\end{align}
where 
\[
\varLambda_{ij}= \sum_{k,\ell\notin\{i,j\}} M_{ijk}\widetilde{M}_{ij\ell}= J_{ij}^2 B_{ij}, \quad 1\le i<j\le n,
\]
with
\[B_{ij}=\sum_{k,\ell\notin\{i,j\}}J_{ijk}^{(1)}J_{ij\ell}^{(2)}.\] 
We turn to consider each covariance term  on the right-hand side of \eqref{VarVn} 
in the following cases separately:
\begin{enumerate}
    \item[1.] Identical edges:  $(i_1,j_1) = (i_2,j_2)$ (i.e., $i_1=i_2$ and $j_1=j_2$);
    \item[2.] Edges sharing exactly one vertex: $|\{i_1,j_1\} \cap \{i_2,j_2\}| = 1$;
    \item[3.] Disjoint edges: $\{i_1,j_1\} \cap \{i_2,j_2\} = \varnothing$.
\end{enumerate}

For Case 1, we set $(i_1,j_1)=(i_2,j_2)=(i,j)$ for convenience. 
Since $J_{ij}^2$ is independent of all $J_{ijk}^{(1)},J_{ij\ell}^{(2)}$ and $J_{ij}^4=J_{ij}^2$,
we obtain that for any given $i$ and $j$,
\begin{equation}\label{VarvLa}
\mathrm{Var}(\varLambda_{ij})=\E[J_{ij}^4]\E[B_{ij}^2]-(\E[J_{ij}^2])^2(\E[B_{ij}])^2
=2p(1-p)\E[B_{ij}^2]-[2p(1-p)]^2(\E[B_{ij}])^2.    
\end{equation}

By \eqref{EJ12kl}, we have
\begin{equation*}
\E[B_{ij}]=\sum_{k\ne i,j}\E[J_{ijk}^{(1)}J_{ijk}^{(2)}] = (j-i-1)p(1-p)+2(n-j)p(1-p)=O(np).  
\end{equation*}

To estimate $\E[B_{ij}^2]$, expand
\[
B_{ij}^2 = \sum_{k_1,\ell_1\notin\{i,j\}}\sum_{k_2,\ell_2\notin\{i,j\}} 
J_{ijk_1}^{(1)}J_{ij\ell_1}^{(2)}J_{ijk_2}^{(1)}J_{ij\ell_2}^{(2)}.
\]
The expectation of each term is non‑zero only when the indices create sufficient overlap among the four factors 
so that the product does not factor into independent zero‑mean components. 
Consider the cardinality $r=|\{k_1,\ell_1,k_2,\ell_2\}|$.
If $r \ge 3$, at least one factor has mean zero and is independent of the others, so the expectation is zero.
For $r = 2$, non‑zero contributions occur only when the ordered pairs are either identical, 
$(k_1,\ell_1) = (k_2,\ell_2)$, or swapped, $(k_1,\ell_1) = (\ell_2,k_2)$. 
Each such pattern yields $O(n^2)$ quadruples, and each expectation is of order $p^2$, 
contributing $O(n^2p^2)$. 

For $r = 1$, all indices coincide, giving $O(n)$ terms each of order $p$, contributing $O(np)$.
Combining these facts, the dominant contribution comes from the $r=2$ patterns, giving
\[
\E[B_{ij}^2]=O(n^2p^2)+O(np)=O(n^2p^2).
\]

Substituting the estimates of $\E[B_{ij}]$ and $\E[B_{ij}^2]$ into \eqref{VarvLa} gives
\[
\mathrm{Var}(\varLambda_{ij}) = O(n^2p^3).
\]
Therefore, by \eqref{sigman2}, summing over all $N=\binom{n}{2}$ edges gives that the total contribution from Case 1 is
\[
\frac{1}{4\sigma_n^4}\sum_{i<j}\mathrm{Var}(\varLambda_{ij})=O\!\left(\frac{n^2\cdot n^2p^3}{n^6p^4}\right)
=O\!\left(\frac{1}{n^2p}\right)=o\!\left(\frac1n\right),
\]
since $np\to\infty$. Thus Case 1 is asymptotically negligible.

For Case 2, without loss of generality, we let $i_1=i_2=i$.  
All other configurations are symmetric and yield the same estimate.
We now estimate $\operatorname{Cov}(\varLambda_{ij_1},\varLambda_{ij_2})$ where $i,j_1\ne j_2$ are fixed.  
Expanding this covariance gives
\[
\operatorname{Cov}(\varLambda_{ij_1},\varLambda_{ij_2})
= \sum_{k_1,\ell_1\notin\{i,j_1\}}\sum_{k_2,\ell_2\notin\{i,j_2\}}
\operatorname{Cov}\bigl(J_{ij_1}^2 J_{ij_1k_1}^{(1)}J_{ij_1\ell_1}^{(2)},\;
J_{ij_2}^2 J_{ij_2k_2}^{(1)}J_{ij_2\ell_2}^{(2)}\bigr).
\]

Consider the index quadruple $(k_1,\ell_1,k_2,\ell_2)$, each of which is not equal to $i$. 
It is evident that if these indices $k_1,\ell_1,k_2,\ell_2$ are distinct, they contribute nothing to the total covariance
$\operatorname{Cov}(\varLambda_{ij_1},\varLambda_{ij_2})$. Further,
the covariance inside the sum is non-zero only for the following patterns (up to symmetry):
\begin{itemize}
    \item $k_1=k_2$ and $\ell_1=\ell_2$; or $k_1=\ell_2$ and $\ell_1=k_2$, with $k_1,\ell_1$ arbitrary but not in $\{j_1,j_2\}$;
    \item patterns that involve the special vertices $j_1,j_2$, i.e., $k_1=j_2$, $\ell_2=j_1$, $\ell_1=k_2$; $\ell_2=j_1, k_1=\ell_1=k_2$; and their symmetric counterparts;
    \item all four indices equal to a common vertex not in $\{j_1,j_2\}$;
    \item the fixed pattern $k_1=\ell_1=j_2$, $k_2=\ell_2=j_1$.
\end{itemize}
In the first patterns there are $O(n^2)$ choices (two free indices), and each term contributes $O(p^4)$.  
In the second and third patterns there are $O(n)$ choices and each term contributes at most $O(p^3)$. 
The last patterns contribute $O(1)$ terms, each $O(p^2)$.   
Hence the dominant contribution comes from the first patterns, yielding
\[
\bigl|\operatorname{Cov}(\varLambda_{ij_1},\varLambda_{ij_2})\bigr| = O(n^2p^4).
\]
The number of ordered triples $(i,j_1,j_2)$ with distinct vertices is $n\binom{n-1}{2}=O(n^3)$. 
Summing over all such triples yields the total contribution from Case~2 is
\[
\frac{1}{4\sigma_n^4}\sum_{i=1}^n\sum_{\substack{j_1,j_2>i\\ j_1\neq j_2}}
\operatorname{Cov}(\varLambda_{ij_1},\varLambda_{ij_2})
= O\!\left(\frac{n^3\cdot n^2p^4}{n^6p^4}\right) = O\!\left(\frac1n\right).
\]  

As for Case 3, we now consider the covariance $\operatorname{Cov}(\varLambda_{i_1j_1},\varLambda_{i_2j_2})$,
where $\{i_1,j_1\}\cap\{i_2,j_2\}=\varnothing$.
Since in this case, 
$J_{i_1j_1}^2$ and $J_{i_2j_2}^2$ are independent and each is independent of $B_{i_1j_1}$ and $B_{i_2j_2}$,
we have
\begin{align}\label{CovLa3}
 \operatorname{Cov}(\varLambda_{i_1j_1},\varLambda_{i_2j_2})
&= \E\big[J_{i_1j_1}^2\big]\E\big[J_{i_2j_2}^2\big]\operatorname{Cov}(B_{i_1j_1},B_{i_2j_2}) \notag\\
&=[2p(1-p)]^2 \sum_{k_1,\ell_1\notin\{i_1,j_1\}}\sum_{k_2,\ell_2\notin\{i_2,j_2\}}
\operatorname{Cov}\big(J_{i_1j_1k_1}^{(1)}J_{i_1j_1\ell_1}^{(2)},J_{i_2j_2k_2}^{(1)}J_{i_2j_2\ell_2}^{(2)}\big).
\end{align}

Consider the covariance inside the sum. If $k_1$ or $\ell_1$ lies outside $\{i_2,j_2\}$, 
then $J_{i_1j_1k_1}^{(1)}$ or $J_{i_1j_1\ell_1}^{(2)}$ is independent of $J_{i_2j_2k_2}^{(1)}J_{i_2j_2\ell_2}^{(2)}$,
and the corresponding covariance  is equal to zero. Similarly for $k_2,\ell_2$.
A necessary condition for a non‑zero contribution is therefore
\begin{equation}\label{klij}
  \{k_1,\ell_1\}\subseteq\{i_2,j_2\}\quad\text{and}\quad\{k_2,\ell_2\}\subseteq\{i_1,j_1\}.  
\end{equation}
Under \eqref{klij}, each of $k_1,\ell_1$ can only be $i_2$ or $j_2$, 
and each of $k_2,\ell_2$ can only be $i_1$ or $j_1$, giving at most $16$ index quadruples.  
A direct calculation (e.g., for the pattern $k_1=\ell_1=i_2$, $k_2=\ell_2=i_1$) 
shows that each such covariance is of order $p$. Hence,
\[
\sum_{k_1,\ell_1\notin\{i_1,j_1\}}\sum_{k_2,\ell_2\notin\{i_2,j_2\}}
\operatorname{Cov}\big(J_{i_1j_1k_1}^{(1)}J_{i_1j_1\ell_1}^{(2)},J_{i_2j_2k_2}^{(1)}J_{i_2j_2\ell_2}^{(2)}\big)=O(p).
\]
By \eqref{CovLa3}, this implies that 
\[\operatorname{Cov}(\varLambda_{i_1j_1},\varLambda_{i_2j_2})=O(p^3),\]
and thus the total contribution from Case~3 is
\[
\frac{1}{4\sigma_n^4} 
\sum_{i_1<j_1} \sum_{i_2<j_2} 
\mathrm{Cov}\big(\varLambda_{i_1j_1},\varLambda_{i_2j_2}\big)=O\!\left(\frac{n^4\cdot p^3}{n^6p^4}\right)
=O\!\left(\frac{1}{n^2p}\right)=o\!\left(\frac1n\right).
\]
Collecting the above results of the three cases gives that ${\rm Var}(V_n)=O(1/n)$.

It remains to verify the bound for $\operatorname{Var}(V_n^*)$.  
Recall from \eqref{VnstarEF} that 
\[
V_n^* = \frac{1}{\sigma_n^2}\sum_{i<j} J_{ij}^2 
\bigg(\sum_{k\ne i,j} J_{ijk}^{(1)}\bigg) \bigg|\sum_{\ell\ne i,j} J_{ij\ell}^{(2)}\bigg|.
\]
The same decomposition $\varLambda_{ij}=J_{ij}^2B_{ij}$ used for $V_n$ now becomes 
$\varLambda_{ij}^* = J_{ij}^2 \sum_{k,\ell} J_{ijk}^{(1)}\bigl|J_{ij\ell}^{(2)}\bigr|$.  
Since $|J_{ij\ell}^{(2)}|$ satisfies $\E[|J_{ij\ell}^{(2)}|]=O(p)$ and $\E[|J_{ij\ell}^{(2)}|^2]=O(p)$, 
the moment bounds used in the analysis of $V_n$ remain valid with only constant factor changes.  
More concretely, all the combinatorial counting arguments in Cases 1--3 rely only on the independence structure 
and the fact that each factor $J_{ijk}^{(1)}$ or $J_{ij\ell}^{(2)}$ (or its absolute value) 
has mean zero and bounded second moments of order $p$.  
The absolute value does not affect the zero‑mean property of the factors when they appear alone, 
nor does it increase the order of the expectations when multiplied with other factors.  
Hence the same case analysis yields $\operatorname{Var}(V_n^*) = O(1/n)$, and we omit the repetitive details.
\end{proof}

With the above preparations, we now proceed to complete the proof of Theorem \ref{Thm:Fn2}.

\begin{proof}[Proof of (iii) in Theorem \ref{Thm:Fn2}]
First, it follows from Lemma \ref{Shaotheorem} that
\begin{equation}\label{DKHnCN}
 d_{\rm K}(H_n, \mathcal{N})\leq\E\left|1 -\E[V_n\given H_n]\right|
                       +2\E \left| \E[V_n^* \given H_n]\right|.   
\end{equation}
Consider the two mean terms separately. 
Recall from \eqref{EVnVnstar} that $\E[V_n]=1$. 
By the law of total expectation, the random variable $\E[(1-V_n)\given H_n]$ has mean 0.
This yields that
\begin{align*}
\E\left|1 -\E[V_n\given H_n]\right|=\E\left|\E[(1 -V_n)\given H_n]\right|
\le \sqrt{\mathrm{Var}(\E[(1 -V_n)\given H_n])}
\le \sqrt{\mathrm{Var}(1-V_n)}=\sqrt{\mathrm{Var}(V_n)},
\end{align*}
where in the second inequality we used the law of total variance.
Similarly, since $\E[V_n^*]=0$ also by \eqref{EVnVnstar}, we have 
\[
\E \left| \E[V_n^* \given H_n]\right|\le \sqrt{\mathrm{Var}(V_n^*)}.
\]
Substituting these into \eqref{DKHnCN}, together with  Lemma $\ref{upperbound}$,  gives that
\[
 d_{\rm K}(H_n,\mathcal{N})\le\sqrt{\mathrm{Var}(V_n)}+2\sqrt{\mathrm{Var}(V_n^*)}=O\,\Big(\frac1{\sqrt{n}}\Big),
\]
which proves \eqref{HnAN}, by the fact that the convergence in Kolmogorov distance implies the 
convergence in distribution.  
\end{proof}

\begin{remark} {\rm
It is noteworthy that 
the expectation $\E[F_n^2]$ cannot be replaced by its leading term in Theorem \ref{Thm:Fn2},
as Slutsky’s theorem is no longer applicable.
For illustration, consider case (ii) with $p=c(1+\varepsilon_n)/n$, 
where $\varepsilon_n\to 0$ and $\sqrt{n}\varepsilon_n\to \delta$ for some constant $\delta>0$.
Using the expressions \eqref{EFn2} and \eqref{EFnasya} for $\E[F_n^2]$,
a straightforward calculation yields
\[
\E[F_n^2]-\big[(c-\lfloor c\rfloor)^2+\lfloor c\rfloor+2c\big]n=
2c \big(c - \lfloor c \rfloor + 1\big) \delta \sqrt{n}\,(1+o(1)).
\]
This discrepancy is of order $\sqrt{n}$, 
the same as the normalizing constant in Theorem \ref{Thm:Fn2}(ii), 
confirming that centering by the exact expectation is necessary. }
\end{remark}

\begin{remark} {\rm
It is also worth noting that the method used for case~(iii) of Theorem~\ref{Thm:Fn2}, 
which relies on the Berry–Esseen bounds developed by Shao and Zhang
\cite{shao2025104574}, 
is not directly transferable to cases~(i) and~(ii). 
In these regimes, 
$\mathrm{Var}(\widetilde{U}_n)$ is not of smaller order than $\mathrm{Var}(F_n^2)$; 
hence $\widetilde{U}_n$ cannot be neglected, 
and the representation $F_n^2 = -2\widetilde{U}_n - 4(n-2)pE_n + W_n$ 
does not reduce to a simple functional form as tractable as in the sparse cases. 
Moreover, $\widetilde{U}_n$ explicitly involves the Fr\'echet mean graph 
through the coefficients $m_{ij}$. 
Since the Fr\'echet mean set in cases~(i) and~(ii) may contain 
multiple non‑isomorphic graphs, 
these coefficients are not uniquely determined. Consequently, 
constructing the quantities $V_n$ and $V_n^*$ in Lemma~\ref{Shaotheorem} 
would depend on the particular choice of the Fr\'echet mean, 
making a uniform variance estimate prohibitively complicated. 
In contrast, the dependency graph approach adopted for cases~(i) and~(ii) 
circumvents these difficulties by working directly with $F_n^2$ and 
exploiting the sparse dependence structure, 
which becomes increasingly dense in case~(iii) 
and thus necessitates the more refined normal approximation of \cite{shao2025104574}. } 
\end{remark}

\subsection{Proof of Theorem \ref{Thm:Frobenius}}
Based on Theorem~\ref{Thm:Fn2}, we now give a formal proof for Theorem \ref{Thm:Frobenius} using the delta method.

\begin{proof}[Proof of Theorem \ref{Thm:Frobenius}]
It follows by \eqref{Def:a} and \eqref{EFnasya} that
\begin{equation}\label{EFn2asy}B
\E[F_{n}^2]\sim \begin{cases}
 2n^2p(1-p), & \text{if } np(1-p)\to0;\\
 \big[(c-\lfloor c\rfloor)^2+\lfloor c\rfloor+2c\big]n, 
 &\text{if } np(1-p)\to c \text{ for some constant } c>0; \\ 
 3n^2p(1-p), & \text{if } np(1-p)\to\infty.
\end{cases}
\end{equation}
Note that $\E[F_n^2]>0$ in all three regimes, 
and the function $g(x)=\sqrt{x}$ is differentiable at $\E[F_n^2]$ with $g'(x)=1/(2\sqrt{x})>0$.  
Therefore,
applying the delta method to \eqref{Fn2AN1} yields the asymptotic normality of $F_n$ centered at $\sqrt{\E[F_n^2]}$, 
with asymptotic variance
\[
\big[g'(\E[F_n^2])\big]^2\cdot\mathrm{Var}(F_n^2)=\frac{\mathrm{Var}(F_n^2)}{4\E[F_n^2]}.
\]
Using the asymptotic expressions for $\mathrm{Var}(F_n^2)$ from \eqref{VarFn2asy} and for $\E[F_n^2]$ from \eqref{EFn2asy}, a straightforward calculation gives
\begin{equation*}
\frac{\mathrm{Var}(F_n^2)}{4\E[F_n^2]} 
\sim \begin{cases}
1, & \text{if } np(1-p)\to0;\\
\dfrac{c\big[4(c-\lfloor c\rfloor+1)^2+c\big]}
    {2\big[(c-\lfloor c\rfloor)^2+\lfloor c\rfloor+2c\big]},   & \text{if } np(1-p)\to c \text{ for some non-integer constant } c>0; \\ 
 \frac16np(1-p),  & \text{if } np(1-p)\to\infty.
\end{cases}
\end{equation*}
Combining this and applying Slutsky's theorem completes the proof of Theorem~\ref{Thm:Frobenius}.
\end{proof}

\bibliographystyle{plain}
\bibliography{sample}
\end{document}